\theoremstyle{definition}
\newtheorem{theorem}{Theorem}
\newtheorem{definition}{Definition}
\newtheorem{example}{Example}
\newtheorem{remark}{Remark}
\newtheorem{lemma}{Lemma}
\begin{document}

\begin{center}
\large\textbf{Rough bi-Heyting algebra and its applications in Heyting-Brouwer logic}\\~\\
\small\textbf{B. Praba$^{*}$, L.P.Anto Freeda}\\
\small\textit{Department of Mathematics, \\Sri Sivasubramaniya Nadar College of Engineering\\ Chennai, Tamil nadu, India.\\prabab@ssn.edu.in, antofreedalp@ssn.edu.in}\\

\rule{0mm}{6mm}

\renewcommand{\thefootnote}{}\footnotetext{\scriptsize
$^{\ast}$Corresponding author\\
\addtocounter{footnote}{-1} 
\footnotetext{\scriptsize This paper was submitted to arXiv under the reference number …………..}
{\it 2020 Mathematics Subject Classification}: 
\\
{\rm This paper was submitted to arXiv under the reference number: https://doi.org/10.48550/arXiv.2208.11851.}
}

\end{center}

\begin{abstract}
This paper focuses on defining a specialized distributive lattice known as rough bi-Heyting algebra on a rough semiring $(T,\Delta,\nabla)$, where $T$ is the set of all rough sets defined for the subsets of $U$ in a given approximation space $I=(U, R)$. In this study, the characterization of weaker complements namely pseudocomplement $(^{*})$, relative pseudocomplement $(\rightarrow)$, dual pseudocomplement $(^{+})$, and dual relative pseudocomplement $(\leftarrow)$ for the elements of $T$ are established. This characterization is instrumental for the existence and uniqueness of weaker complements on a rough semiring $(T,\Delta,\nabla)$.
The existence of relative pseudocomplement and dual relative pseudocomplement between the elements of $T$ supports to prove that $RBHey(T)=(T,\Delta,\nabla,^{*},^{+},\rightarrow,\leftarrow, RS(\emptyset), RS(U))$ is indeed a rough bi-Heyting algebra. In addition, these concepts are extended to demonstrate that $RBHey(T)$ is a rough stone algebra and dual rough stone algebra. Further, rough bi-Heyting algebra models various semantics of rough Heyting-Brouwer logic, amalgamating bi-Heyting algebra and Heyting-Brouwer logic on a rough semiring $(T,\Delta,\nabla)$.  The syntax for the rough Heyting-Brouwer logic is defined in terms of well-formed formulas incorporating the rough bi-Heyting algebraic operations. Finally, syntax validation is verified based on the rough Heyting-Brouwer logic's propositional and predicate calculus semantics.
\end{abstract}

\textbf{Keywords:} Bi-Heyting algebra, Stone algebra, Heyting-Brouwer logic, Kripke semantics

\textbf{AMS 2020 Classification:} 06D20, 03B20, 06E15

\section{Introduction}
In recent times, rough set theory (RST) and its application have gained more attention among the researcher's group due to its nature of handling noisy and imprecise data. Zdzislaw I. Pawlak \cite{20}, a Polish computer scientist formalized this conventional set, called a rough set.  The idea of integrating RST with other theories is to enhance its ability to process complex datasets. The increased use of data is a significant advancement across various industries. This allows the researchers to refine their methodologies in the evolving field of RST to expand their applications in data analytics, artificial intelligence, data mining, machine learning, decision-making scenarios, etc. Numerous applications have raised attention to refine the conventional method. Among different adaptations to improve the traditional RST, one technique that stands out is substituting the clustering method for the equivalence relation. Data handling can be more flexible and sophisticated, allowing for a wider range of applications where similarity-based grouping would be appropriate. However, for the construction of classical algebraic structures on RST, non-overlapping equivalence classes of well-defined partitions are required. These partitions are crucial in the integrity of algebraic structures on RST.  
This paper establishes a new algebraic framework by taking rough semiring $(T, \Delta, \nabla)$ \cite{14} as an underlying structure. This is achieved by considering an approximation space $I = (U, R)$, where the universal set $U$ is the collection of a non-empty finite set of objects and $R$ is an arbitrary equivalence relation on $U$. The set of all rough sets is denoted by $T$ and each rough set is defined through the lower and upper approximation of subsets $X$ of $U$. The binary operations Praba $\Delta$ and Praba $\nabla$ \cite{13} defined between the subsets of $U$ are used to establish the existence of the least upper bound and greatest lower bound. Then it is proved that $T$ is a partially ordered set with its supremum and infimum obtained using Praba $\Delta$ and Praba $\nabla$ form a lattice called rough lattice \cite{13}. This rough lattice which is distributive with the operation Praba $\Delta$ and Praba $\nabla$ proves that $(T, \Delta, \nabla)$ is a semiring called a rough semiring \cite{14}. The detailed study of the structure of this semiring is studied \cite{10}. Building on this existing literature, many research works have been carried out that gave rise to various algebraic structures connected with distributive lattices. This paper is such a unique contribution to the field by defining bi-Heyting algebra within a rough semiring $(T, \Delta, \nabla)$ \cite{14}, presenting a new approach for modeling Heyting-Brouwer logic.\\

A mathematical structure involved in the study of non-classical logic is Heyting algebra. Heyting Algebra is named after Brouwer’s student, Arend Heyting in 1930. It is a distributive lattice that generalizes Boolean algebra. Heyting developed the methods and concepts used to solve the problems arising in intuitionistic logic. The key role of relative pseudocomplement in Heyting algebra is how well one element is relatively complemented with the other element. This is analogous to the implication operation in the context of intuitionistic logic.  Chajda and Langer \cite{9} study provides an extensive connection between relatively pseudocomplemented meet-semilattices and arbitrary posets to investigate the concept of relatively pseudocomplemented posets. Also, the authors establish the characterizations of relatively pseudocomplemented posets. But Ivan Chajda et al. \cite{8} have extended the relative pseudocomplement to posets (partially ordered sets) and characterized it using $L$-identities. In particular, they present the idea of congruences in this context. Adam Grabowski \cite{1} presents a formal introduction to pseudocomplement in a lattice. This work defines the concepts of the skeleton and the set of dense elements in a pseudocomplemented lattice. Though Adam Grabowski’s work gave the construction of stone algebra for any distributive lattice the work was first initiated by Raymond Balbes \cite{15}. Heyting algebra is extended to define double Heyting algebra on a lattice incorporating the properties of dual pseudocomplement and dual relative pseudocomplement was proposed by Sankappanavar \cite{16}. The algebraic approach to model the intuitionistic logic is found in the weaker complements of Heyting algebra. This contains the essence of intuitionistic reasoning between logic and lattice theory. The domain of Heyting-Brouwer propositional logic (H-B) serves as an interface between intuitionistic and classical logic. The H-B logic is an extension of intuitionistic propositional logic with the additional operators $(^{+})$ and $(\leftarrow)$. The article by Cecylia Rauszer \cite{4} established the features of the logical connectives unique to H-B logic. The link between H-B logic and other logical systems is also explored in this work. In the previous work \cite{5}, the author provides an in-depth understanding of the propositional calculus of H-B logic and makes a substantial contribution to the field of symbolic logic. Thomas Macaulay Ferguson \cite{19} establishes the addition of new connectives to the language through extensions of intuitionistic logic, such as Rauszer’s H-B logic (HB) and Nelson’s logic of constructible falsehood (N). The hierarchy of increasing semantics in intuitionistic logic ranging from the least Kripke semantic to more general algebraic semantics was given by Guram Bezhanishvili et al. \cite{7}. Arnold Beckmann and Norbert Preining \cite{3} examine the connection between Godel logic and intermediate predicate logic based on countable linear Kripke frames with constant domains. The study in \cite{2} builds on earlier research by Arnold Beckmann et al. \cite{3} to demonstrate that recursive enumeration is not possible for logic based on linear Kripke frames. Steffen Lewitzka \cite{17} examines the well-known outcome that states that a formula $\phi$ is an intuitionistic propositional logic (IPC) theorem if and only if its Godel-translation $\phi^{\prime}$ is a modal logic S4 theorem. He introduces a conservative extension of classical propositional logic (CPC), known as classical modal logic L, which prevents the collapse of both logics and has a copy of IPC. Piero Pagliani \cite{11} indicates the rough set features associated with rough bottom and rough top equality through logic-algebraic operations. Properties unique to rough equalities have been inferred by systematically examining the connections between the negation operations in the Nelson algebra. Also, there is an interesting analogy between semi-simple Nelson algebras and three-valued Lukasiewicz algebras by Piero Pagliani \cite{12}. Francesc Esteva et al. \cite{6} define distributive involutive residuated lattices. The author investigates the connection between para consistency and Nelson’s constructive logic, emphasizing how Nelson’s logic is an algebraic semantic subclass of residuated lattices. Stephen Comer \cite{18} explores the connection between algebraic logic, rough sets, and information systems, with a special emphasis on indiscernibility relations and approximation spaces. This formally proposes the set of axioms for rough relation algebra which makes it linkages to the study of ordinary relation algebra.\\

Heyting algebra allows us to reason constructively and investigate mathematical ideas that fall outside the classical logic, by serving as a bridge between algebraic structures and intuitionistic logic. This work's rough set theoretic ideas aim to define rough bi-Heyting algebra by considering a rough semiring $(T, \Delta, \nabla)$. Even though in the literature \cite{11,12,18} the weaker complements help in obtaining Heyting algebra is discussed for the rough sets in a bounded distributive lattice. In contrast, this work establishes the distinct characterization of weaker complements for the rough semiring using operations Praba $\Delta$ and Praba $\nabla$ which has not been so far addressed in the literature. For the considered approximation space, the arbitrary equivalence relation on $U$ induces disjoint equivalence classes which is significant to this work. This is crucial in preserving algebraic structures while providing characterization for the complements. For the existence of bi-Heyting algebra, the characterization of pseudocomplement and relative pseudocomplement for the elements of $T$ along with its dual are established. The characterization of pseudocomplement and its dual are also extended in defining the double rough stone algebra. Due to its exploration of pseudocomplement and relative pseudocomplement along with its dual, bi-Heyting algebra contains a hierarchical array of algebraic structures between Boolean algebra and lattices. 
Since analyzing rough sets through Heyting algebra is discussed for the approximation spaces and indiscernibility relations, this paper explores the association between these concepts through the H-B logic. The bi-intuitionistic logic which involves constructive proof theories and a semantic approach inspired by Heyting-Brouwer logic \cite{4,5,19} proposes models considering a partially ordered set. The relative pseudocomplement in Heyting algebra and the dual relative pseudocomplement in dual Heyting algebra are analogous to the implication and dual implication respectively which capture the idea of how a proposition implies the other in a bi-intuitionistic logic framework. The motivation to explore H-B logic in rough bi-Heyting algebra is the characterization of Heyting’s negation $(^{*})$ and Brouwer’s negation $(^{+})$. It is defined in such a way that the law of excluded middle that does not hold for the H-B logic in the classical case also does not hold for the rough Heyting-Brouwer logic. The construction of propositional and predicate calculus for the H-B logic validates the model in a partially ordered set of worlds. In this paper, for the given approximation space and a rough bi-Heyting algebra each rough set in $T$ is assumed to be world taken to validate the semantics. Each object in $U$ is a propositional variable and the possible conclusion will be the ordered pair of 0's and 1's obtained through the lower and upper approximation of the rough set. The well-formed formulas formed from the rough sets using the rough bi-Heyting algebraic connectives Praba $\Delta$, Praba $\nabla$, $\rightarrow$, $\leftarrow$, and unary operators $(^{*})$, $(^{+})$ also will yield the conclusion through the ordered pair of 0's and 1's.\\

The literature study provided in this work investigates the role of weaker complements for obtaining Heyting algebra in rough sets within a bounded distributive lattice. However, there is a lack of research on the unique characterization of weaker complements on a rough semiring $(T,\Delta,\nabla)$. Although the idea of analyzing rough sets through Heyting algebra has been discussed, the association between these concepts and H-B logic for validating the semantics by our proposed method has not been investigated, especially within the context of rough bi-Heyting algebra. This gap serves as a strong motivation behind this work to bridge the gap between algebraic structures and H-B logic by introducing rough bi-Heyting algebra, which is not yet addressed in the literature. Additionally, this paper seeks to explore H-B logic within rough bi-Heyting algebra, focusing on the characterization of Heyting’s negation ($^{*}$), Brouwer’s negation ($^{+}$), intuitionistic implication ($\rightarrow$), dual intuitionistic implication ($\leftarrow$) and validating semantics within the rough set framework. Unlike the traditional method, the proposed approach uses a rough set not only to determine the validity (or) non-validity of the semantic but also its possible satisfaction. This is a significant approach in the decision-making scenario different from the existing approach which does not account for the possible satisfaction of the semantics.\\

This paper unfolds into distinct sections. Preliminary definitions of rough semiring $(T,\Delta,\nabla)$ are provided in Section 2. The existence of rough bi-Heyting algebra on a rough semiring $(T,\Delta,\nabla)$ is demonstrated in Section 3. Establishing the semantics of rough H-B logic with rough bi-Heyting algebraic operators is discussed in Section 4. The paper's conclusion and the outline of future work are presented in Section 5.

\section{Preliminaries}
In this section,  the formal definitions of rough set are given.

\begin{definition}\cite{20}
 The structure $I=(U, R)$ is an approximation space where $U$ is a non-empty finite set of objects, called a universe and $R$ is an arbitrary equivalence relation on $U$. A partition induced by the relation $R$ consists of an equivalence class denoted $[x]_{R}$, which is a subset of $U$. The lower and upper approximation defined for the subset $X$ of $U$ is
\begin{center}
$R_{-}(X)=\{x \in U \mid [x]_{R} \subseteq X\}$\\  $R^{-}(X)=\{x \in U \mid [x]_{R} \cap X \neq \emptyset\}$ 
\end{center}   
 If $X$ is an arbitrary subset of $U$, then the rough set $RS(X)$ is an ordered pair $(R_{-}(X), R^{-}(X))$. The set of rough sets is denoted by $T$ and defined by $T=\{RS(X) \mid X \subseteq U\}$.
\end{definition}

\begin{definition}\cite{13}
 Let $X, Y \subseteq U$. The Praba join of $X$ and $Y$ is denoted by $X \Delta Y$ and defined as $X \Delta Y=X \cup Y$ if $IW(X \cup Y)=IW(X)+IW(Y)-IW(X \cap Y)$, where $IW(X)$ is the number of equivalence classes in $X$. \\ 
If $IW(X \cup Y) > IW(X)+IW(Y)-IW(X \cap Y)$, then the equivalence classes obtained by the union of $X$ and $Y$ are identified. The elements of that class belonging to $Y$ are deleted and the new set is named $Y$. Now, we obtain $X \Delta Y$. This process is repeated until 
\begin{center}
$IW(X \cup Y)=IW(X)+IW(Y)-IW(X \cap Y)$
\end{center}
\end{definition}

\begin{definition} \cite{13}
If $X,Y \subseteq U$, then an element $x \in U$ is called pivot element, if $[x]_{R} \nsubseteq X \cap Y$, but $[x]_{R} \cap X \neq \emptyset$ and $[x]_{R} \cap Y \neq \emptyset$.
\end{definition}

\begin{definition} \cite{13}
If $X, Y \subseteq U$, then the set of pivot elements of $X$ and $Y$ is called the pivot set of $X$ and $Y$ and is denoted by $P_{X \cap Y}$.
\end{definition}

\begin{definition} \cite{13}
Let $X, Y \subseteq U$. The Praba meet of $X$ and $Y$ is denoted as $X \nabla Y$ and defined by $X \nabla Y=\{x \in U \mid [x]_{R} \subseteq X \cap Y\} \cup P_{X \cap Y}$. Here each pivot element in $P_{X \cap Y}$ is the representative of that particular class.
\end{definition}

\begin{theorem}\cite{13}
For any two sets $X,Y$ in $U$,
\begin{enumerate}
	\item $RS(X \Delta Y)$ is the least upper bound of $RS(X)$ and $RS(Y)$
	\item $RS(X \nabla Y)$ is the greatest lower bound of $RS(X)$ and $RS(Y)$
\end{enumerate}
\end{theorem}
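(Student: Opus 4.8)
Throughout I use the natural order on $T$, $RS(X)\sqsubseteq RS(Y)$ iff $R_{-}(X)\subseteq R_{-}(Y)$ and $R^{-}(X)\subseteq R^{-}(Y)$, which is the one for which the least upper bound and greatest lower bound are to be taken. The plan is to reduce the whole statement to the single claim that the Praba operations realise the coordinatewise union and intersection of the approximations, namely
\begin{align*}
R_{-}(X\Delta Y)&=R_{-}(X)\cup R_{-}(Y), & R^{-}(X\Delta Y)&=R^{-}(X)\cup R^{-}(Y),\\
R_{-}(X\nabla Y)&=R_{-}(X)\cap R_{-}(Y), & R^{-}(X\nabla Y)&=R^{-}(X)\cap R^{-}(Y).
\end{align*}
Granting these, part (1) is immediate: $R_{-}(X),R_{-}(Y)\subseteq R_{-}(X)\cup R_{-}(Y)=R_{-}(X\Delta Y)$ (and likewise for $R^{-}$) shows $RS(X\Delta Y)$ is an upper bound of $RS(X),RS(Y)$, and if $RS(W)$ is any upper bound then $R_{-}(X),R_{-}(Y)\subseteq R_{-}(W)$ forces $R_{-}(X\Delta Y)=R_{-}(X)\cup R_{-}(Y)\subseteq R_{-}(W)$, and similarly on upper approximations, so $RS(X\Delta Y)\sqsubseteq RS(W)$. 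Part (2) is the mirror argument with all inclusions reversed.

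First I would unravel the deletion procedure behind $\Delta$. The defect $IW(X\cup Y)-\bigl(IW(X)+IW(Y)-IW(X\cap Y)\bigr)$ is a nonnegative integer, and it is strictly positive exactly when there is an equivalence class $C$ with $C\subseteq X\cup Y$ but $C\nsubseteq X$ and $C\nsubseteq Y$; these are precisely the classes that get identified, and from each of them the elements lying in $Y$ are removed. The three facts I would record are: (i) $X$ is never changed, so $R_{-}(X)$ and $R^{-}(X)$ stay fixed; (ii) any class wholly inside $X$ or wholly inside $Y$ is never one of these straddling classes, hence is never thinned and remains wholly inside the running union; (iii) after thinning, a straddling class $C$ keeps its nonempty part in $X\setminus Y$, so it still meets the running union but is no longer contained in it. A short check shows that one thinning step drops the defect by exactly one and produces no new straddling class, so the procedure halts, and in the terminal configuration every class contained in the final union is contained in $X$ or in the (final) $Y$. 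Combining this with (i)--(iii) yields the two join identities.

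For the meet the computation is direct. From $X\nabla Y=\{x:[x]_{R}\subseteq X\cap Y\}\cup P_{X\cap Y}$ I would examine each class $C$: if $C\subseteq X\cap Y$ then $C\subseteq X\nabla Y$; if $C$ is a pivot class then, since a pivot class has at least two elements (a singleton meeting both $X$ and $Y$ lies in $X\cap Y$ and so cannot be a pivot class), exactly the representative of $C$ lies in $X\nabla Y$; and every other class is disjoint from $X\nabla Y$. Hence $R_{-}(X\nabla Y)=\{x:[x]_{R}\subseteq X\cap Y\}=R_{-}(X)\cap R_{-}(Y)$, and $R^{-}(X\nabla Y)$ consists of the classes contained in $X\cap Y$ together with the pivot classes, which is exactly the collection of classes meeting both $X$ and $Y$, i.e. $R^{-}(X)\cap R^{-}(Y)$. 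With the four identities in hand the theorem follows from the first paragraph.

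I expect the main obstacle to be the join half: one must make sure that re-identifying classes and renaming the thinned set as $Y$ at each pass cannot create a fresh straddling class out of a class that was previously wholly inside $X$ or $Y$, and that the loop cannot run forever. The strict decrease of the integer defect settles termination, and the fact that thinning only ever deletes elements and never touches $X$ settles the point that no spurious straddling class can appear; once these are pinned down, the coordinatewise description of $X\Delta Y$ --- and with it both bound statements --- drops out, the $\nabla$ side being comparatively routine.
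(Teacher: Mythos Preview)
The paper does not prove this statement: it is Theorem~1 in the Preliminaries, quoted from (Praba and Mohan 2013) without argument, so there is no in-paper proof to compare your proposal against.

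That said, your plan is sound and would yield a complete proof. Reducing both parts to the four coordinatewise identities
\[
R_{-}(X\Delta Y)=R_{-}(X)\cup R_{-}(Y),\quad R^{-}(X\Delta Y)=R^{-}(X)\cup R^{-}(Y),
\]
\[
R_{-}(X\nabla Y)=R_{-}(X)\cap R_{-}(Y),\quad R^{-}(X\nabla Y)=R^{-}(X)\cap R^{-}(Y),
\]
is exactly the right move, since the order on $T$ is componentwise inclusion and the l.u.b./g.l.b. in a product of power sets is componentwise union/intersection. Your analysis of the thinning loop for $\Delta$ is correct: the integer defect $IW(X\cup Y)-\bigl(IW(X)+IW(Y)-IW(X\cap Y)\bigr)$ counts precisely the straddling classes, each thinning removes one such class from $IW(X\cup Y)$ while leaving $IW(X)$, $IW(Y)$, $IW(X\cap Y)$ unchanged, and no non-straddling class can become straddling since $X$ is fixed and $Y$ only shrinks. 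Your treatment of $\nabla$ is also correct, including the observation that a pivot class must have at least two elements, which is what prevents the single representative from dragging the whole class into $R_{-}(X\nabla Y)$. One small point worth making explicit in a final write-up: the paper's Definition of $P_{X\cap Y}$ reads as the set of \emph{all} pivot elements, but the clause ``each pivot element in $P_{X\cap Y}$ is the representative of that particular class'' in the definition of $\nabla$ means exactly one element per pivot class is selected; your argument already uses this correctly, but stating it up front avoids any ambiguity.
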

 
\begin{theorem}\cite{14}
For any given approximation space $I=(U, R)$, $(T,\Delta,\nabla)$ is a semiring called a rough semiring.
\end{theorem}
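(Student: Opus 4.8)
The plan is to observe that a semiring with $\Delta$ as addition, $\nabla$ as multiplication, zero element $RS(\emptyset)$ and unit $RS(U)$ is precisely what a bounded distributive lattice supplies, so that the whole statement reduces to showing $(T,\Delta,\nabla)$ is a bounded distributive lattice. The first step is to invoke Theorem~1 (Praba and Mohan 2013): $RS(X\Delta Y)$ and $RS(X\nabla Y)$ are respectively the least upper bound and the greatest lower bound of $RS(X)$ and $RS(Y)$ under the componentwise-inclusion order on $T$. This has two consequences used throughout: the operations descend to genuinely well-defined binary operations on $T$ (they depend only on the rough sets $RS(X),RS(Y)$, not on the chosen representatives, even though the set-level definition of $\Delta$ involves a non-canonical deletion step), and $(T,\le)$ is a lattice whose join is $\Delta$ and whose meet is $\nabla$.

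Next I would dispatch the ``formal'' semiring axioms from this lattice picture. Commutativity and associativity of both $\Delta$ and $\nabla$ are immediate from the corresponding laws for $\vee$ and $\wedge$ in any lattice. Since $R_-(\emptyset)=R^-(\emptyset)=\emptyset$, the rough set $RS(\emptyset)=(\emptyset,\emptyset)$ is the least element of $T$, so $RS(\emptyset)\,\Delta\,RS(X)=RS(X)$ and $RS(\emptyset)$ is the additive identity; dually $RS(U)=(U,U)$ is the greatest element and hence the identity for $\nabla$. Finally $RS(\emptyset)\,\nabla\,RS(X)$ is the greatest lower bound of the bottom element and $RS(X)$, which is again $RS(\emptyset)$, giving the absorption of the zero element under multiplication.

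The only substantial point is the distributive law $RS(X)\nabla\big(RS(Y)\Delta RS(Z)\big)=\big(RS(X)\nabla RS(Y)\big)\Delta\big(RS(X)\nabla RS(Z)\big)$, equivalently that the lattice $(T,\Delta,\nabla)$ is distributive. In one direction the inequality $\big(a\nabla b\big)\Delta\big(a\nabla c\big)\le a\nabla(b\Delta c)$ holds in every lattice, so only the reverse inclusion must be argued, and this is where the pivot-set mechanism in the definition of $\nabla$ and the class-identification/deletion mechanism in the definition of $\Delta$ must be controlled. The cleanest route I would take is to set up an order isomorphism between $(T,\le)$ and the product $\prod_{C}L_C$ indexed by the equivalence classes $C$ of $R$, where $L_C$ is the three-element chain $0<\tfrac12<1$ when $|C|\ge 2$ and the two-element chain $0<1$ when $C$ is a singleton: send $RS(X)$ to the tuple assigning to $C$ the value $1$ if $C\subseteq R_-(X)$, the value $\tfrac12$ if $C\subseteq R^-(X)\setminus R_-(X)$, and $0$ otherwise. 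I would then check that under this correspondence $\Delta$ and $\nabla$ go over to coordinatewise maximum and minimum; distributivity of $T$ then follows from distributivity of each chain $L_C$, hence of the product, and the semiring claim is complete.

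I expect the main obstacle to be exactly the verification that the Praba join, with its repeated deletion of elements until the inclusion--exclusion identity for $IW$ is restored, really corresponds coordinatewise to the maximum in $\prod_{C}L_C$, and that the pivot set in the Praba meet yields the coordinatewise minimum; this is where the combinatorial definitions have to be unwound case by case (notably distinguishing classes that are fully inside, fully outside, or straddling each of $X,Y,Z$). Everything after that step is formal, so the proof effort concentrates there.
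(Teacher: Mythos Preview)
The paper does not supply its own proof of this theorem: it appears in the Preliminaries section as a result quoted from (Praba et al 2015), with no argument given. So there is nothing in the present paper to compare your proposal against directly.

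That said, your plan is mathematically sound and is essentially the standard route taken in the rough-set literature. Reducing the semiring claim to ``$(T,\Delta,\nabla)$ is a bounded distributive lattice'' is correct, the appeal to Theorem~1 for well-definedness and the lattice structure is exactly right, and the coordinatewise description via the map $RS(X)\mapsto\big(\,\text{state of }C\text{ in }X\,\big)_{C}$ into $\prod_{C}L_{C}$ (with $L_{C}$ a two- or three-element chain according as $|C|=1$ or $|C|\ge 2$) is the classical way to see distributivity of the rough-set lattice. The only part that needs genuine care, as you note, is checking that Praba $\Delta$ and Praba $\nabla$ really correspond to coordinatewise $\max$ and $\min$ under this identification; once that is verified class by class, the remaining semiring axioms are formal. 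This is precisely the kind of argument one finds in (Praba and Mohan 2013) and (Praba et al 2015), so your proposal is in line with the cited sources even though the present paper omits the details.
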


\section{Rough bi-Heyting algebra on $(T,\Delta,\nabla)$}
Throughout this section, we employ $I=(U, R)$ as an approximation space and $(T,\Delta,\nabla)$ is a rough semiring \cite{14}.
\begin{definition}
In a rough semiring $(T,\Delta,\nabla)$, an element $RS(X) \in T$ is said to have pseudocomplement if there exists a greatest element $RS(X)^{*} \in T$, disjoint from $RS(X)$ if $RS(X) \nabla RS(X)^{*}=RS(X \nabla X^{*})=RS(\emptyset)$. The pseudocomplement $RS(X)^{*}$ is defined by
 \begin{center}
 $RS(X)^{*}= \underset{RS(Y)}{\max} \{RS(Y) \in T \mid RS(X) \nabla RS(Y)=RS(X \nabla Y)= RS(\emptyset)\}$
 \end{center}
\end{definition}

\begin{definition}
A pseudocomplemented rough semiring $(T,\Delta,\nabla,^{*},RS(\emptyset),RS(U))$ is a rough semiring $(T,\Delta,\nabla)$ with the least element $RS(\emptyset)$ if every element in $T$ has a pseudocomplement.
\end{definition}
Note that the pseudocomplemented rough semiring $(T,\Delta,\nabla,^{*},RS(\emptyset),RS(U))$ has a greatest element (say) $RS(\emptyset)^{*}=RS(U)$, then $RS(\emptyset) \nabla RS(\emptyset)^{*}=RS(\emptyset) \nabla RS(U) =RS(\emptyset \nabla U)=RS(\emptyset)$.

\begin{theorem} 
The pseudocomplement of $RS(X)$ is unique.
\end{theorem}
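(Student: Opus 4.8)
The plan is to reduce the statement to the elementary fact that the greatest element of a subset of a poset, when it exists, is unique. Recall that by (Praba and Mohan 2013) the operations $\Delta$ and $\nabla$ return the least upper bound and the greatest lower bound of two Rough sets, so $(T,\Delta,\nabla)$ is a lattice and the induced relation $RS(A)\le RS(B)\iff RS(A)\,\nabla\,RS(B)=RS(A)$ is a partial order on $T$; in particular it is antisymmetric. This antisymmetry is the only structural input needed.

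First I would fix $RS(X)\in T$ and write
\[
S=\{RS(Y)\in T \mid RS(X)\,\nabla\,RS(Y)=RS(X\nabla Y)=RS(\emptyset)\}
\]
for the set occurring in the definition of the pseudocomplement. Suppose $RS(Y_1)$ and $RS(Y_2)$ are both pseudocomplements of $RS(X)$; then each is a greatest element of $S$ and, in particular, each lies in $S$. Since $RS(Y_1)$ is greatest in $S$ and $RS(Y_2)\in S$, we obtain $RS(Y_2)\le RS(Y_1)$; swapping the two roles gives $RS(Y_1)\le RS(Y_2)$. Antisymmetry then yields $RS(Y_1)=RS(Y_2)$.

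If one prefers an argument phrased purely through the semiring operations, I would instead show that $A\,\Delta\,B\in S$ whenever $A$ and $B$ are pseudocomplements of $RS(X)$: using that $RS(\emptyset)$ is the least element, that it is idempotent for $\Delta$, and that $\nabla$ distributes over $\Delta$ in the distributive Rough semiring, $RS(X)\,\nabla\,(A\,\Delta\,B)=(RS(X)\,\nabla\,A)\,\Delta\,(RS(X)\,\nabla\,B)=RS(\emptyset)\,\Delta\,RS(\emptyset)=RS(\emptyset)$. Maximality of $A$ inside $S$ then forces $A=A\,\Delta\,B$, i.e. $B\le A$, and symmetrically $A\le B$, so $A=B$.

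I do not expect a genuine obstacle. The only points deserving a line of justification are that the order on $T$ induced by $\Delta,\nabla$ is antisymmetric (immediate from $T$ being a lattice) and, in the second variant, that $\nabla$ distributes over $\Delta$ on Rough sets — precisely the distributivity of the Rough semiring used throughout this section. With either ingredient recorded, uniqueness is a one-line consequence.
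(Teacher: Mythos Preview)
Your argument is correct: uniqueness of a greatest element in a poset is immediate from antisymmetry, and both variants you give are sound. In fact the paper itself closes its proof with exactly your observation (``By the definition, if the pseudocomplement exists then it is unique'').

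However, the paper's proof is doing something substantially different in its body. Rather than proving bare uniqueness, it establishes \emph{existence} by exhibiting an explicit formula: it claims $RS(X)^{*}=RS(E-P^{-}(X))$, where $E$ is the union of equivalence classes, verifies that $RS(X)\nabla RS(E-P^{-}(X))=RS(\emptyset)$, and then checks maximality by comparing with an arbitrary competing element. So the paper's theorem is really ``the pseudocomplement exists (and here it is), hence is unique,'' whereas you have proved only the conditional ``if it exists, it is unique.'' Your route is more elementary and general (it works in any pseudocomplemented lattice), but the paper's route buys the concrete description $RS(X)^{*}=RS(E-P^{-}(X))$, which is indispensable for what follows: Lemma~1 (the Rough Stone identity $RS(X)^{*}\Delta RS(X)^{**}=RS(U)$) and Remark~3 both compute directly with this formula. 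If you adopt your proof, you would need to supply that formula separately before those later results can be stated or proved.
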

\begin{proof}
For any $RS(X) \in T$, the pseudocomplement of $RS(X)$ is defined by
\begin{center}
$RS(X)^{*}= \underset{RS(Y)}{\max} \{RS(Y) \in T \mid RS(X)\nabla RS(Y)=RS(X \nabla Y)= RS(\emptyset) \}$ 
\end{center}
Now claim $RS(X)^{*}= RS(E-R^{-}(X))$ to prove $RS(X)\nabla RS(E-R^{-}(X))=RS(\emptyset)$. Let $E$ be the set of equivalence classes defined on $U$. Consider, $RS(X) \nabla RS(E-R^{-}(X)) = RS(X \nabla E-R^{-}(X))$. 
\begin{center}
 $X \nabla (E-R^{-}(X))= \{x \in U \mid [x]_{R} \subseteq X \cap E-R^{-}(X)\} \cup P_{X \cap E-R^{-}(X)} = A \cup B $ (say) 
\end{center}
  where $A=\{x \in U \mid [x]_{R} \subseteq X \cap E-R^{-}(X)\}$ and $B=P_{X \cap E-R^{-}(X)}$.\\
Since $X \cap  E-R^{-}(X)= \emptyset$ then $A=\emptyset$. Also, there is no pivot element common to $X$ and $E-R^{-}(X)$. Hence $B=\emptyset$.
	\begin{center}
	$\therefore RS(X \nabla E-R^{-}(X))=RS(\emptyset)$
	\end{center}
\textbf{To Prove} $RS(X)^{*}$ is maximum \\
Let $RS(X_{1})^{*}$ be the maximum such that $RS(X) \nabla RS(X_{1})^{*}=RS(X \nabla X_{1}^{*})= RS(\emptyset)$. To prove $RS(X)^{*}$ is maximum, it is enough to prove $RS(X)^{*}=RS(X_{1})^{*}$. It is clear that
\begin{equation}
 RS(X)^{*} \subseteq RS(X_{1})^{*} 
\end{equation}
Now to prove $RS(X_{1})^{*} \subseteq RS(X)^{*}$, it is enough to prove $(R_{-}(X_{1}^{*}), R^{-}(X_{1}^{*})) \subseteq (R_{-}(X^{*}), R^{-}(X^{*}))$. So claim $R_{-}(X_{1}^{*}) \subseteq R_{-}(X^{*})$. \\Let $x \in R_{-}(X_{1}^{*})$ then 
 $[x]_{R} \subseteq X_{1}^{*}$ but $[x]_{R} \cap X = \emptyset$
\begin{center}
$\Rightarrow  [x]_{R} \subseteq X^{*}$ (since $X_{1}^{*} \subseteq X^{*}$)\\
$\Rightarrow  x \in R_{-}(X^{*})$\\
$\therefore {R_{-}(X_{1}^{*}) \subseteq R_{-}(X^{*})}$
\end{center}
Now claim $R^{-}(X_{1}^{*}) \subseteq R^{-}(X^{*})$. Let $y \in R^{-}(X_{1}^{*})$ then $[y]_{R} \cap X_{1}^{*} \neq \emptyset$. If $[y]_{R} \cap X \neq \emptyset$ then $y \in R_{X \cap X_{1}^{*}}$ is not possible since $RS(X_{1}^{*} \nabla X)=RS(\emptyset)$.
\begin{center}
$\Rightarrow  [y]_{R} \cap X = \emptyset$ \\
$\Rightarrow  [y]_{R} \cap X^{*} \neq \emptyset$\\
$\Rightarrow  y \in R^{-}(X^{*})$ \\
$\therefore R^{-}(X_{1}^{*}) \subseteq R^{-}(X^{*})$\\
$\therefore {(R_{-}(X_{1}^{*}), R^{-}(X_{1}^{*})) \subseteq (R_{-}(X^{*}), R^{-}(X^{*}))}$
\end{center}
\begin{equation}
RS(X_{1})^{*} \subseteq RS(X)^{*}    
\end{equation}
Hence the pseudocomplement $RS(X)^{*}$ exists and it is a maximum. Therefore, the pseudocomplement of $RS(X)$ is unique.
\end{proof}

\begin{example}
Consider an approximation space $I=(U,R)$ where $U=\{x_{1},x_{2},x_{3},x_{4},x_{5},x_{6}\}$ be the universe and $R$ is an arbitrary equivalence relation on $U$. 

The equivalence classes obtained by the relation $R$ are given by
$X_{1}=[x_{1}]_{R}=\{x_{1},x_{3}\}$, $X_{2}=[x_{2}]_{R}=\{x_{2},x_{4},x_{6}\}$ and $X_{3}=[x_{5}]_{R}=\{x_{5}\}$. Then the collection of rough sets obtained are,
\begin{center}

$T=\{RS(\emptyset),RS(\{x_{1}\}),RS(\{x_{2}\}),RS(X_{1}),RS(X_{2}),RS(X_{3}),RS(\{x_{1}\} \cup \{x_{2}\}),RS(X_{1} \cup \{x_{2}\}),RS(\{x_{1}\} \cup X_{2}),RS(\{x_{1}\} \cup X_{3}),RS(\{x_{2}\} \cup X_{3}),RS(X_{1} \cup X_{3}),RS(X_{2} \cup X_{3}),RS(X_{1} \cup X_{2}),RS(\{x_{1}\} \cup \{x_{2}\} \cup X_{3}),RS(X_{1} \cup \{x_{2}\} \cup X_{3}),RS(\{x_{1}\} \cup X_{2} \cup X_{3}),RS(U)\}$.    
\end{center}
The pseudocomplement of elements in $T$ is given in Table 1,

\begin{table}[!ht]
\begin{adjustbox}{width=0.8\columnwidth,center}
\tiny
	\centering
		\begin{tabular}{|c|c|}
		\hline $RS(X)$ & $RS(X)^{*}$ \\ \hline $RS(\emptyset)$ & $RS(U)$\\ \hline  $RS(\{x_{1}\})$,$RS(X_{1})$ & $RS(X_{2} \cup X_{3})$\\ \hline $RS(\{x_{2}\})$,$RS(X_{2})$ & $RS(X_{1} \cup X_{3})$\\ \hline $RS(X_{3})$ & $RS(X_{1} \cup X_{2})$ \\ \hline $RS(\{x_{1}\} \cup \{x_{2}\})$,$RS(X_{1} \cup \{x_{2}\})$,$RS(\{x_{1}\} \cup X_{2})$,$RS(X_{1} \cup X_{2})$ & $RS(X_{3})$ \\ \hline $RS(\{x_{1}\} \cup X_{3})$,$RS(X_{1} \cup X_{3})$ & $RS(X_{2})$\\ \hline $RS(\{x_{2}\} \cup X_{3})$,$RS(X_{2} \cup X_{3})$ & $RS(X_{1})$\\ \hline $RS(\{x_{1}\} \cup \{x_{2}\} \cup X_{3})$,$RS(X_{1} \cup \{x_{2}\} \cup X_{3})$,$RS(\{x_{1}\} \cup X_{2} \cup X_{3})$,$RS(X_{1} \cup X_{2} \cup X_{3})$ & $RS(\emptyset)$\\ \hline
		\end{tabular}
		\end{adjustbox}
\caption{}
\label{Table}
\end{table}
\end{example}

\begin{example}
The following example indicates that $RS(X) \Delta RS(X)^{*}=RS(U)$ is not always true.\\

From Example 1, Consider $RS(\{x_{1}\} \cup \{x_{2}\}) \in T$ and its pseudocomplement is given by $RS(\{x_{1}\} \cup \{x_{2}\})^{*}=RS(E-R^{-}(\{x_{1}\} \cup \{x_{2}\}))=RS(E-X_{1} \cup X_{2})=RS(X_{3})$. Then \\

\begin{center}
$RS(\{x_{1}\} \cup \{x_{2}\}) \Delta RS(\{x_{1}\} \cup \{x_{2}\})^{*}=RS(\{x_{1}\} \cup \{x_{2}\}) \Delta RS(X_{3})=RS(\{x_{1}\} \cup \{x_{2}\} \cup X_{3}) \neq RS(U)$.    
\end{center}

Similarly for $RS(X_{2}) \in T$ and its pseudocomplement $RS(X_{2})^{*}=RS(E-R^{-}(X_{2}))=RS(X_{1} \cup X_{3})$ 

\begin{center}
$RS(X_{2}) \Delta RS(X_{2})^{*}=RS(X_{2}) \Delta RS(X_{1} \cup X_{3})=RS(U)$
\end{center}

\end{example}

\begin{remark}
When $X$ is the union of one (or) more equivalence classes then $RS(X) \Delta RS(X)^{*} =RS(U)$.
\end{remark}

\begin{definition}
A pseudocomplemented rough semiring $(T,\Delta,\nabla,^{*},RS(\emptyset),RS(U))$  is called a rough stone algebra if $RS(X)^{*} \Delta RS(X)^{**}=RS(U)$ for all $RS(X) \in T$.
\end{definition}

\begin{lemma}
The pseudocomplemented rough semiring $(T,\Delta,\nabla,^{*},RS(\emptyset),RS(U))$ is a rough stone algebra.
\end{lemma}
\begin{proof}
To prove $T$ is a rough stone algebra, it is enough to show that for any $RS(X) \in T$, 
\begin{equation}
RS(X)^{*} \Delta RS(X)^{**}=RS(U)  
\end{equation}
The pseudocomplement of any $RS(X) \in T$ is defined to be 
\begin{center}
$RS(X)^{*}=RS(E-R^{-}(X))$
\end{center}
and its
\begin{center}
 $RS(X)^{**}=RS(E-R^{-}(X))^{*}$\\
$=RS(E-R^{-}(E-R^{-}(X)))$ (since $RS(X)^{*}=RS(E-R^{-}(X))$)\\
$ =RS(E-(E-R^{-}(X)))=RS(R^{-}(X))$\\
\end{center}
Hence $RS(X)^{*} \Delta RS(X)^{**}=RS(E-R^{-}(X)) \Delta RS(R^{-}(X))=RS(E-R^{-}(X) \Delta R^{-}(X))=RS(E)=RS(U)$. Therefore (3) holds.
\end{proof}

\begin{example}
The following example shows that the rough stone algebra condition is satisfied by the elements of $T$.\\ 

From Example 1, 

\begin{center}
$RS(\{x_{1}\} \cup X_{3})^{*}=RS(E-R^{-}(\{x_{1}\} \cup X_{3}))=RS(E-X_{1} \cup X_{3})=RS(X_{2})$
\end{center}
and
\begin{center}
$RS(\{x_{1}\} \cup X_{3})^{**}=RS(R^{-}(\{x_{1}\} \cup X_{3}))=RS(X_{1} \cup X_{3})$
\end{center}
Therefore, $RS(\{x_{1}\} \cup X_{3})^{*} \Delta RS(\{x_{1}\} \cup X_{3})^{**}=RS(X_{2}) \Delta RS(X_{1} \cup X_{3})=RS(U)$. Thus, any element in $T$ satisfies the condition of rough stone algebra.  
\end{example}  

\begin{remark}
From Example 3, it is also clear that $RS(X)^{*} \nabla RS(X)^{**}=RS(\emptyset)$.
\end{remark}

\begin{definition}
For $RS(Y), RS(Z) \in T$, an element $RS(Y) \rightarrow RS(Z)$ is the relative pseudocomplement of $RS(Y)$ with respect to $RS(Z)$ if $\underset{RS(W)}{\max} \{RS(W) \in T \mid RS(Y) \nabla RS(W) \leq RS(Z)\}$  
\end{definition}

\begin{remark}
The pseudocomplement of $RS(X)$ is also represented in terms of a relative pseudocomplement. It is $RS(X)^{*}=RS(X) \rightarrow RS(\emptyset)$.
\end{remark}

\begin{theorem}
The relative pseudocomplement of $RS(Y)$ with respect to $RS(Z)$ is unique.
\end{theorem}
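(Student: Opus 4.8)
The plan is to derive the statement straight from the definition together with the order structure of $T$. By definition $RS(Y)\rightarrow RS(Z)$ is the supremum, in $(T,\subseteq)$, of the set $S_{Y,Z}=\{RS(W)\in T \mid RS(Y)\nabla RS(W)\leq RS(Z)\}$, where $\subseteq$ is the componentwise order on the pairs $(R_{-}(\cdot),R^{-}(\cdot))$ under which, by Theorems 1 and 2, $(T,\Delta,\nabla)$ is a lattice. A supremum in a poset is unique, so: if $RS(W_{1})$ and $RS(W_{2})$ both satisfy the defining property, each is a least upper bound of $S_{Y,Z}$, hence $RS(W_{1})\subseteq RS(W_{2})$ (because $RS(W_{2})$ is an upper bound of $S_{Y,Z}$ and $RS(W_{1})$ is least) and, symmetrically, $RS(W_{2})\subseteq RS(W_{1})$; antisymmetry of $\subseteq$ on $T$ then forces $RS(W_{1})=RS(W_{2})$. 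This is exactly the pattern of the proof of Theorem 3, with \emph{maximum} replaced by \emph{least upper bound}.

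To keep the argument self-contained in the spirit of Theorem 3, I would also verify that this supremum is actually attained, so that the relative pseudocomplement genuinely exists. The set $S_{Y,Z}$ is nonempty, since $RS(Y)\nabla RS(\emptyset)=RS(\emptyset)\leq RS(Z)$; and $T$ is finite (as $U$ is), so $\sigma:=\Delta_{RS(W)\in S_{Y,Z}}RS(W)$ exists in $T$. Since $(T,\Delta,\nabla)$ is a distributive Rough semiring, $\nabla$ distributes over this finite $\Delta$, whence $RS(Y)\nabla\sigma=\Delta_{RS(W)\in S_{Y,Z}}\bigl(RS(Y)\nabla RS(W)\bigr)\leq RS(Z)$, so $\sigma\in S_{Y,Z}$ and $\sigma=\max S_{Y,Z}=RS(Y)\rightarrow RS(Z)$. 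If one wants an explicit closed form, it can be read off coordinatewise over the set $E$ of equivalence classes — each class of size $\geq 2$ contributing the chain \emph{out} $<$ \emph{boundary} $<$ \emph{in} and each singleton the chain \emph{out} $<$ \emph{in} — as follows: on a class $C$, the value of $RS(Y)\rightarrow RS(Z)$ is \emph{in} when the state of $Y$ on $C$ is $\leq$ that of $Z$ on $C$, and equals the state of $Z$ on $C$ otherwise; the case $Z=\emptyset$ recovers $RS(X)^{*}=RS(X)\rightarrow RS(\emptyset)=RS(E-P^{-}(X))$.

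The main obstacle is the distributivity step that puts $\sigma$ back into $S_{Y,Z}$: one must check that $RS(Y)\nabla(-)$ preserves $\Delta$, which in this concrete model reduces to a case analysis over equivalence classes and pivot sets of the type carried out in (Praba and Mohan 2013) — verifying that a pivot class of $Y$ against $\Delta_{i}RS(W_{i})$ is already a pivot class of $Y$ against some single $RS(W_{i})$, and tracking the lower approximations. If one instead takes the purely order-theoretic route, no computation is needed for uniqueness at all, since least upper bounds are automatically unique; but then the existence implicit in the definition should be argued separately, and pinning down the explicit formula above — which is \emph{not} the Boolean relative pseudocomplement of $\mathcal{P}(U)\times\mathcal{P}(U)$ restricted to $T$ — is where the care goes.
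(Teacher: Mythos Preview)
Your argument is correct, but it proceeds quite differently from the paper. The paper does not invoke the general fact that suprema in a poset are unique; instead it exhibits an explicit candidate
\[
RS(Y\rightarrow Z)=RS\bigl((Y\nabla Z)\,\Delta\,(E-P^{-}(Y-Y\nabla Z))\bigr),
\]
checks by a two-case computation that $RS(Y)\nabla RS(Y\rightarrow Z)\leq RS(Z)$, and then shows elementwise that any $RS(K)$ with $RS(K\nabla Y)\leq RS(Z)$ satisfies $K\subseteq Y\rightarrow Z$, hence $RS(K)\leq RS(Y\rightarrow Z)$. Existence and maximality together give uniqueness.

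Your route is the abstract one: uniqueness is immediate because least upper bounds in a poset are unique, and existence falls out of finiteness plus the distributive law $RS(Y)\nabla(\Delta_i RS(W_i))=\Delta_i(RS(Y)\nabla RS(W_i))$, which is already part of the Rough semiring structure from (Praba et al 2015), so the ``main obstacle'' you flag is in fact no obstacle at all here. This is cleaner and avoids any element chasing. What the paper's approach buys, and yours does not fully deliver, is a concrete closed formula for $RS(Y\rightarrow Z)$ in terms of $P^{-}$ and the underlying sets; that formula is what drives the worked tables (Tables~3, 4.1, 4.2) and the later semantics examples, so if you want to recover those you would still need to identify your $\sigma$ with the paper's explicit expression rather than the informal coordinatewise description you sketch.
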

\begin{proof}
Define
\begin{center}
$RS(Y) \rightarrow RS(Z)=RS((Y \nabla Z) \Delta (E-R^{-}(Y-Y \nabla Z)))$
\end{center}
Then the condition of relative pseudocomplement of $RS(Y)$ with respect to $RS(Z)$ is $RS(Y) \nabla (RS(Y) \rightarrow RS(Z)) \leq RS(Z)$
\begin{center}
$\Leftrightarrow RS(Y) \nabla RS((Y \nabla Z) \Delta (E-R^{-}(Y-Y \nabla Z))) \leq RS(Z)$
\end{center}
\begin{equation}
 \Leftrightarrow RS(Y \nabla((Y \nabla Z) \Delta (E-R^{-}(Y-Y \nabla Z))) \leq RS(Z)    
\end{equation}
Now the relative pseudocomplement condition is verified by solving (4) in the following cases.\\
\textbf{Case 1}\\
 Assume $Y \nabla Z=\emptyset$. Then from (4) 
 \begin{center}
 $RS(\emptyset) \leq RS(Z)$\\
 \end{center}
\textbf{Case 2}\\
Assume  $Y \nabla Z \neq \emptyset$. Then from (4), consider $RS(Y \nabla ((Y \nabla Z) \Delta (E-R^{-}(Y-Y \nabla Z)))$
\begin{center}
$=RS(Y \nabla (Y \Delta E-R^{-}(Y-Y \nabla Z)) \nabla (Z \Delta E-R^{-}(Y-Y \nabla Z))$\\
$\leq RS(Y \nabla Z)$\\
$\leq RS(Z)$\\
$\therefore RS(Y \nabla((Y \nabla Z) \Delta (E-R^{-}(Y-Y \nabla Z))) \leq RS(Z)$    
\end{center}
Now to prove $RS(Y) \rightarrow RS(Z)$ is the maximum. Suppose there exists $RS(K) \in T$ satisfying (4) then
\begin{equation}
RS(K \nabla Y) \leq RS(Z)
\end{equation} 
It is enough to prove $RS(K) \leq RS(Y) \rightarrow RS(Z)$. Let $x \in K$ and if $x \in Y \nabla Z$ then $x \in (Y \nabla Z) \Delta (E-R^{-}(Y-Y \nabla Z))$. Therefore, $K \subseteq (Y \nabla Z) \Delta (E-R^{-}(Y-Y \nabla Z))$
\begin{center}
$RS(K) \leq RS(Y) \rightarrow (Z)$
\end{center}
On the other hand, let $x \in K$ and $x \notin Y \nabla Z$. Suppose if $x \in Y$, then $x \in K \nabla Y$
\begin{center}
$\Rightarrow x \in Z$  (by (5))\\
$\Rightarrow x \in Y \nabla Z$
\end{center}
which is a contradiction.\\
Therefore, $x \notin Y \nabla Z$
\begin{center}
$\Rightarrow x \notin Y$\\
$\Rightarrow x \notin R^{-}(Y-Y \nabla Z)$\\
$\Rightarrow x \in E-R^{-}(Y-Y \nabla Z)$\\
$\Rightarrow x \in (Y \nabla Z) \Delta (E-R^{-}(Y-Y \nabla Z))$
\end{center}
Therefore, $K \subseteq (Y \nabla Z) \Delta (E-R^{-}(Y-Y \nabla Z))$ and 
\begin{center}
$RS(K) \leq RS(Y) \rightarrow RS(Z)$
\end{center}
Thus $RS(Y) \rightarrow RS(Z)$ is a maximum element in $T$ satisfying the condition of relative pseudocomplement. Hence the pseudocomplement of $RS(Y)$ relative to $RS(Z)$ exists and it is unique.
\end{proof}

\begin{definition}
A rough Heyting algebra (or) rough pseudo-boolean algebra $(T,\Delta,\nabla,^{*}, \rightarrow, RS(\emptyset), RS(U))$ is a rough semiring $(T,\Delta,\nabla)$ with the least element $RS(\emptyset)$ in which the relative pseudocomplement is defined for every pair of elements in $T$.
\end{definition}

\begin{theorem}
For any given approximation space $I=(U, R)$, the rough semiring $(T,\Delta,\nabla)$ is a rough Heyting algebra.
\end{theorem}
\begin{proof}
$(T,\Delta,\nabla)$ be a rough semiring \cite{14}, and by Theorem 4, the relative pseudocomplement exists for every pair of elements in $T$. Therefore, $(T,\Delta,\nabla,^{*}, \rightarrow,RS(\emptyset),RS(U))$ is a Rough Heyting algebra.
\end{proof}

\begin{definition}
Let $(T,\Delta,\nabla)$ be a rough semiring. A dual pseudocomplement of an element $RS(X)$ in $T$ is the least element $RS(X)^{+}$ if $RS(X) \Delta RS(X)^{+}=RS(U)$ and $RS(X)^{+}$ is defined as 
\begin{center}
$RS(X)^{+}= \underset{RS(Y)}{min} \{RS(Y) \in T \mid RS(X)\Delta RS(Y)=RS(X \Delta Y)= RS(U)\}$
\end{center}
\end{definition}

\begin{theorem}
The dual pseudocomplement of $RS(X)$ is unique.
\end{theorem}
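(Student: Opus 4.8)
The statement is the exact order-dual of Theorem~3, so the plan is to run the same three-phase argument under the dictionary $\nabla\leftrightarrow\Delta$, $RS(\emptyset)\leftrightarrow RS(U)$, $\subseteq\leftrightarrow\supseteq$, $\mathrm{max}\leftrightarrow\mathrm{min}$, $P^{-}(\cdot)\leftrightarrow P_{-}(\cdot)$: exhibit a candidate, check it meets the defining property, and check it is the \emph{least} element meeting that property; uniqueness is then automatic, since the minimum of a subset of $T$ is unique.

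Dual to $RS(X)^{*}=RS(E-P^{-}(X))$, the candidate I would propose is
\[
RS(X)^{+}=RS\big(E-P_{-}(X)\big),
\]
the Rough set determined by the union of all equivalence classes \emph{not} contained in $X$ (equivalently $U\setminus R_{-}(X)$, a definable set, so its lower and upper approximations both equal $E-P_{-}(X)$). For the defining identity $RS(X)\,\Delta\,RS(X)^{+}=RS(U)$: every equivalence class either sits inside $P_{-}(X)\subseteq X$ or sits entirely inside $E-P_{-}(X)$, whence $X\cup(E-P_{-}(X))=U$; invoking Theorem~1, $RS(X)\,\Delta\,RS(X)^{+}$ is the least upper bound of $(P_{-}(X),P^{-}(X))$ and $(E-P_{-}(X),E-P_{-}(X))$, and any upper bound of these two has lower approximation containing $P_{-}(X)\cup(E-P_{-}(X))=U$, so the least upper bound is $(U,U)=RS(U)$.

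For minimality, suppose $RS(K)\in T$ satisfies $RS(X)\,\Delta\,RS(K)=RS(X\,\Delta\,K)=RS(U)$; I must show $RS(X)^{+}\le RS(K)$. Since $RS(X)^{+}$ is definable it suffices to show $E-P_{-}(X)\subseteq P_{-}(K)$, and as both sides are unions of classes it suffices to show that every class $C$ with $C\nsubseteq X$ in fact satisfies $C\subseteq K$. Here is the crux: if $C\nsubseteq X$ and also $C\nsubseteq K$, then in $X\cup K$ the class $C$ is either still incomplete, or it is completed only by the union and hence ``identified'', so its $K$-elements get deleted in forming the Praba join $X\,\Delta\,K$; in either case $C\nsubseteq X\,\Delta\,K$, contradicting $P_{-}(X\,\Delta\,K)=P_{-}(U)=U$. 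Therefore $C\subseteq K$ for every such $C$, giving $E-P_{-}(X)\subseteq P_{-}(K)\subseteq P^{-}(K)$ and hence $RS(X)^{+}\le RS(K)$. Finally, exactly as in Theorem~3, if $RS(K)$ is itself a minimal element with the defining property then $RS(K)\le RS(X)^{+}$ and $RS(X)^{+}\le RS(K)$ force $RS(K)=RS(X)^{+}$, so the dual pseudocomplement exists and is unique.

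The main obstacle is that this ``duality'' is not a formal one: the Praba join $\Delta$ is governed by the class-identification-and-deletion clause in its definition, whereas the Praba meet $\nabla$ is governed by pivot elements, and the two mechanisms are not mirror images. Concretely, the smooth points in the proof of Theorem~3 where ``no pivot element is common to $X$ and $X_{1}^{*}$'' was used must be replaced, in the minimality argument above, by the case analysis on whether a class is ``identified'' in $X\,\Delta\,K$; unwinding that clause correctly (and, with it, the precise behaviour of $R_{-}(X\,\Delta\,K)$ relative to $R_{-}(X\cup K)$) is the step that requires real care rather than mere transcription of Theorem~3. Routing the existence and defining-identity parts through Theorem~1 sidesteps most of the $\Delta$-bookkeeping, isolating this case analysis as the genuine content of the proof.
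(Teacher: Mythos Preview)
Your proposal is correct and follows the same overall strategy as the paper: the same candidate $RS(X)^{+}=RS(E-P_{-}(X))$, verification of the defining identity $RS(X)\,\Delta\,RS(X)^{+}=RS(U)$, and then a minimality argument from which uniqueness is immediate.

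The execution differs in two places, and in both your version is a bit more transparent. For the defining identity, the paper checks the $IW$ equation in the definition of $\Delta$ and reads off $X\cup(E-P_{-}(X))=U$; you instead observe that $E-P_{-}(X)$ is definable and invoke Theorem~1 to compute the least upper bound directly, which avoids the $IW$ bookkeeping entirely. For minimality, the paper posits a second minimal element $RS(X_{1})^{+}$ and compares lower and upper approximations of $X^{+}$ and $X_{1}^{+}$ termwise; you instead prove the stronger statement that $RS(X)^{+}\le RS(K)$ for \emph{every} $RS(K)$ satisfying $RS(X)\,\Delta\,RS(K)=RS(U)$, by the class-level dichotomy ``$C$ incomplete in $X\cup K$'' versus ``$C$ completed only by the union and hence pruned in $X\,\Delta\,K$''. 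Your argument isolates exactly where the non-formal duality between $\Delta$ and $\nabla$ bites (the identification-and-deletion clause) and handles it in one clean case split, whereas the paper's approximation-level comparison leaves that mechanism implicit. Both routes reach the same conclusion; yours makes the dependence on the specific definition of Praba $\Delta$ more explicit.
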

\begin{proof}
For every $RS(X) \in T$, the dual pseudocomplement of $RS(X)^{+}$ is defined as,
\begin{center}
 $RS(X)^{+}= \underset{RS(Y)}{min} \{RS(Y) \in T/ RS(X) \Delta RS(Y)=RS(X \Delta Y)= RS(U) \}$
\end{center}
Let $E$ be the set of equivalence classes defined on $U$. Now claim $RS(X)^{+}=RS(E-R_{-}(X))$, to prove $RS(X) \Delta RS(X)^{+}=RS(U)$. Consider, 
\begin{center}
$RS(X) \Delta RS(E-R_{-}(X))=RS(X \Delta E-R_{-}(X))$
\end{center}
 Here,
\begin{center}
$X \Delta E-R_{-}(X)=X \cup E-R_{-}(X)$ if $IW(X \cup E-R_{-}(X))=IW(X)+IW(E-R_{-}(X))-IW(X \cap E-R_{-}(X))$\\
$\therefore  RS(X \Delta E-R_{-}(X))=RS(X \cup E-R_{-}(X))=RS(U)$
\end{center}
\textbf{To Prove} $RS(X)^{+}$ is minimum\\ Assume there is another minimal dual pseudocomplement $RS(X_{1})^{+}$ such that $RS(X) \Delta RS(X_{1})^{+}=RS(X \Delta X_{1}^{+})=RS(U)$. To prove $RS(X)^{+}$ is minimum, it is enough to show that $RS(X)^{+}=RS(X_{1})^{+}$. It is clear from the assumption that
\begin{equation}
RS(X_{1})^{+} \subseteq RS(X)^{+}
\end{equation}
Now to prove,  $RS(X)^{+} \subseteq RS(X_{1})^{+}$ it is enough to prove $(R_{-}(X^{+}),R^{-}(X^{+})) \subseteq (R_{-}(X_{1}^{+}),R^{-}(X_{1}^{+}))$\\ First claim $R_{-}(X^{+}) \subseteq R_{-}(X_{1}^{+})$\\ Let $x \in R_{-}(X^{+})$ then $[x]_{R} \subseteq X^{+}$
\begin{center}
$\Rightarrow [x]_{R} \subseteq X_{1}^{+}$ (since $X^{+} \subseteq X_{1}^{+}$)\\ 
$\Rightarrow x \in R_{-}(X_{1}^{+})$\\
$\therefore R_{-}(X^{+}) \subseteq R_{-}(X_{1}^{+})$
\end{center}
Now claim $R^{-}(X^{+}) \subseteq R^{-}(X_{1}^{+})$\\ Let $y \in R^{-}(X^{+})$ then $[y]_{R} \cap X^{+} \neq \emptyset$ (since $RS(X \Delta X^{+})=RS(U)$). If $[y]_{R} \cap X = \emptyset$ then $[y]_{R} \cap X_{1}^{+} \neq \emptyset$ (since $RS(X \Delta X^{+})=RS(U)$)
\begin{center}
 $\Rightarrow y \in R^{-}(X_{1}^{+})$\\
$\therefore R^{-}(X^{+}) \subseteq R^{-}(X_{1}^{+})$
\end{center}
If $[y]_{R} \cap X \neq \emptyset$ then $y \in R_{X \cap X^{+}}$ (since $RS(X \Delta X^{+})=RS(U)$)
\begin{center}
 $\Rightarrow [y]_{R} \cap  X_{1}^{+} \neq \emptyset$\\
 $\Rightarrow y \in R^{-}(X_{1}^{+})$ \\
$\therefore R^{-}(X^{+}) \subseteq R^{-}(X_{1}^{+})$
\end{center}
\begin{equation}
RS(X)^{+} \subseteq RS(X_{1})^{+}
\end{equation} 
From (6) and (7), it is obtained that $RS(X)^{+}=RS(X_{1})^{+}$. Therefore $RS(X)^{+}$ is the minimum and it exists for any $RS(X) \in T$. Hence the dual pseudocomplement of $RS(X)$ is unique. 
\end{proof}

\begin{example}
From Example 1, the dual pseudocomplement of the elements in $T$ is given in Table 2

\begin{table}[!ht]
\begin{adjustbox}{width=0.5\columnwidth,center}
	\centering
		\begin{tabular}{|c|c|}
\hline $RS(X)$ & $RS(X)^{+}$ \\ \hline $RS(\emptyset)$,$RS(\{x_{1}\})$,$RS(\{x_{2}\})$,$RS(\{x_{1}\} \cup \{x_{2}\})$ & $RS(U)$\\ \hline  $RS(X_{1})$,$RS(X_{1} \cup \{x_{2}\})$ & $RS(X_{2} \cup X_{3})$\\ \hline $RS(X_{2})$,$RS(\{x_{1}\} \cup X_{2})$ & $RS(X_{1} \cup X_{3})$\\ \hline $RS(X_{3})$,$RS(\{x_{1}\} \cup X_{3})$,$RS(\{x_{2}\} \cup X_{3})$,$RS(\{x_{1}\} \cup \{x_{2}\} \cup X_{3})$ & $RS(X_{1} \cup X_{2})$ \\ \hline $RS(X_{1} \cup X_{2})$ & $RS(X_{3})$ \\ \hline $RS(X_{1} \cup X_{3})$,$RS(X_{1} \cup \{x_{2}\} \cup X_{3})$ & $RS(X_{2})$\\ \hline $RS(X_{2} \cup X_{3})$,$RS(\{x_{1}\} \cup X_{2} \cup X_{3})$ & $RS(X_{1})$\\ \hline $RS(X_{1} \cup X_{2} \cup X_{3})$ & $RS(\emptyset)$\\ \hline
\end{tabular}
\end{adjustbox}
\caption{}
\label{Table}
\end{table}
\end{example}

\begin{remark}
The following example shows that $RS(X) \nabla RS(X)^{+}=RS(\emptyset)$ is not always true.    
\end{remark}

\begin{example}
From Example 1, Consider $RS(\{x_{1}\} \cup \{x_{2}\} \cup X_{3}) \in T$ whose dual pseudocomplement is given by $RS(\{x_{1}\} \cup \{x_{2}\} \cup X_{3})^{+}=RS(E-X_{3})=RS(X_{1} \cup X_{2})$. So that
\begin{center}
$RS(\{x_{1}\} \cup \{x_{2}\} \cup X_{3}) \nabla RS(\{x_{1}\} \cup \{x_{2}\} \cup X_{3})^{+}=RS(\{x_{1}\} \cup \{x_{2}\} \cup X_{3}) \nabla RS(X_{1} \cup X_{2})=RS(\{x_{1}\} \cup \{x_{2}\})$   
\end{center}
Similarly for $RS(X_{2} \cup X_{3}) \in T$ and its dual pseudocomplement $RS(X_{2} \cup X_{3})^{+}=RS(E-X_{2} \cup X_{3}))=RS(X_{1})$ 
\begin{center}
$RS(X_{2} \cup X_{3}) \nabla RS(X_{2} \cup X_{3})^{+}=RS(X_{2} \cup X_{3}) \nabla RS(X_{1})=RS(\emptyset)$
\end{center}
But this condition is verified when $X$ is the union of one (or) more equivalence classes.
\end{example}

\begin{definition}
A dual rough stone algebra is a dual pseudocomplemented rough semiring  $(T,\Delta,\nabla,^{+}, RS(\emptyset), RS(U))$ satisfying $RS(X)^{+} \nabla RS(X)^{++}=RS(\emptyset)$ for all $RS(X) \in T$.
\end{definition}

\begin{lemma}
The dual pseudocomplemented rough semiring $(T,\Delta,\nabla,^{+},RS(\emptyset),RS(U))$ is a dual rough stone algebra.
\end{lemma}
\begin{proof}
The proof is straightforward.   
\end{proof}

\begin{example} 
The following example describes the elements of $T$, satisfying the dual rough stone algebra condition.\\ 
From Example 1, 
\begin{center}
$RS(\{x_{1}\} \cup X_{2} \cup X_{3})^{+}=RS(E-X_{2} \cup X_{3})=RS(X_{1})$     
\end{center}
and 
\begin{center}
$RS(\{x_{1}\} \cup X_{2} \cup X_{3})^{++}=RS(X_{2} \cup X_{3})$  
\end{center}
 Then 
 \begin{center}
$RS(\{x_{1}\} \cup X_{2} \cup X_{3})^{+} \nabla RS(\{x_{1}\} \cup X_{2} \cup X_{3})^{++}=RS(X_{1}) \nabla RS(X_{2} \cup X_{3})=RS(\emptyset)$    
 \end{center} 
 Therefore, the dual rough stone algebra condition is satisfied by the elements of $T$.
\end{example}

\begin{definition}
A double rough stone algebra $(T,\Delta,\nabla,^{*},^{+}, RS(\emptyset), RS(U))$ is a rough stone algebra and dual rough stone algebra with the unary operation of pseudocomplementation and dual pseudocomplementation.
\end{definition}

\begin{definition}
For $RS(Y), RS(Z) \in T$, an element $RS(Y) \leftarrow RS(Z)$ is the dual relative pseudocomplement of $RS(Y)$ with respect to $RS(Z)$ if $\underset{RS(V)}{min} \{RS(V) \in T \mid RS(Z) \Delta RS(V) \geq RS(Y)\}$
\end{definition}

\begin{remark}
A dual pseudocomplement of $RS(X) \in T$ is expressed in terms of dual relative pseudocomplement and it is defined by $RS(X)^{+}=RS(U) \leftarrow RS(X)$.
\end{remark}

\begin{theorem}
The dual relative pseudocomplement of $RS(Y)$ with respect to $RS(Z)$ is unique.
\end{theorem}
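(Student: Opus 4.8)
The plan is to run the argument dual to the proof of Theorem~4, using the order-duality of the distributive Rough semiring $(T,\Delta,\nabla)$. Since the dual relative pseudocomplement is by definition an infimum, it suffices to exhibit a concrete element of $T$ that is the \emph{least} member of $S=\{RS(V)\in T \mid RS(Z)\,\Delta\,RS(V)\ \geq\ RS(Y)\}$; existence and uniqueness of the infimum then follow at once. So the first step is to write down an explicit candidate for $RS(Y\leftarrow Z)$, obtained by dualizing the term used for $RS(Y\rightarrow Z)$: where the relative pseudocomplement was realized as the $\Delta$-join of the meet $RS(Y\nabla Z)$ with the ``disjointness'' set $RS(E-P^{-}(Y-Y\nabla Z))$, the dual relative pseudocomplement should be realized dually, as the $\nabla$-meet of a join of $RS(Y)$ and $RS(Z)$ with a ``non-containment'' set expressed through the lower-approximation operator $P_{-}$ in place of $P^{-}$. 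Concretely, on the underlying approximation pairs this amounts to the Rough set whose lower approximation is $P_{-}(Y)-P_{-}(Z)$ and whose upper approximation adjoins $P^{-}(Y)-P^{-}(Z)$; I would fix a set-theoretic representative $Y\leftarrow Z$ realising this pair (in terms of $\nabla$, $\Delta$ and pivot sets), using the consistency checks $RS(U\leftarrow X)=RS(E-P_{-}(X))=RS(X)^{+}$ (the Remark preceding this theorem) and $RS(Y\leftarrow U)=RS(\emptyset)$ as guides.

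Next I would verify the defining inequality $RS(Z)\,\Delta\,RS(Y\leftarrow Z)\ \geq\ RS(Y)$, splitting into the case $RS(Y)\leq RS(Z)$ (where $RS(Y\leftarrow Z)=RS(\emptyset)$ and the inequality is trivial) and the general case, exactly as Theorem~4 splits on $Y\nabla Z=\emptyset$ versus $Y\nabla Z\neq\emptyset$; the computation is routine once one uses distributivity together with the fact (immediate from the least-upper-bound / greatest-lower-bound theorem for $\Delta$ and $\nabla$) that $\Delta$ and $\nabla$ act as componentwise union and intersection on the lower/upper approximation pairs. Then I would prove minimality: given any $RS(K)\in S$, I would show $RS(Y\leftarrow Z)\leq RS(K)$ by an element chase dual to the one in Theorem~4. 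A class lying in the lower (resp.\ upper) part of $Y\leftarrow Z$ is a class contained in (resp.\ meeting) $Y$ but not contained in (resp.\ disjoint from) $Z$; such a class sits in $P_{-}(Y)$ (resp.\ $P^{-}(Y)$), hence in $P_{-}(Z\Delta K)=P_{-}(Z)\cup P_{-}(K)$ (resp.\ the upper analogue), and since $RS(Z)$ cannot supply it, it is forced into $P_{-}(K)$ (resp.\ $P^{-}(K)$). Thus $RS(Y\leftarrow Z)$ is both a member and a lower bound of $S$, hence its minimum; the infimum therefore exists, equals this element, and is unique.

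The \textbf{main obstacle} is the bookkeeping forced by the pivot elements: $\Delta$ is not literal union (it runs the identification-and-deletion procedure) and $\nabla$ records one representative per pivot class, so the representative $Y\leftarrow Z$ must be chosen so that its pivot set matches exactly the boundary classes of $Y$ that survive the removal of $Z$. Confirming that the candidate pair $\bigl(P_{-}(Y)-P_{-}(Z),\ (P_{-}(Y)-P_{-}(Z))\cup(P^{-}(Y)-P^{-}(Z))\bigr)$ is realised by some $W\subseteq U$ with the expected pivot behaviour is where the care is needed; once the representative is pinned down, the inequality and the minimality are mechanical, and uniqueness is then a formality, since an infimum, when it exists, is automatically unique.
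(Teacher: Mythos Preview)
Your overall strategy coincides with the paper's: exhibit a concrete candidate for $RS(Y\leftarrow Z)$, verify the defining inequality $RS(Z)\,\Delta\,RS(Y\leftarrow Z)\geq RS(Y)$ by a case split, and then prove minimality by an element chase against an arbitrary competitor $RS(K)$. The difference is in the candidate itself. The paper does \emph{not} dualize by trading $P^{-}$ for $P_{-}$; it simply writes down the set-level representative
\[
RS(Y\leftarrow Z)=RS\bigl(Y\ \nabla\ P^{-}(Y-Y\nabla Z)\bigr),
\]
still using the upper-approximation operator $P^{-}$, and then runs the two cases $Y\nabla Z=\emptyset$ and $Y\nabla Z\neq\emptyset$ exactly as in Theorem~4. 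Because this candidate is already a Praba meet of explicit subsets of $U$, it is automatically a bona fide rough set, so the realizability question you flag as the ``main obstacle'' never arises. Your componentwise target pair $\bigl(P_{-}(Y)\setminus P_{-}(Z),\,(P_{-}(Y)\setminus P_{-}(Z))\cup(P^{-}(Y)\setminus P^{-}(Z))\bigr)$ is correct and ultimately agrees with the paper's element, and your minimality chase is essentially the paper's; but by guessing the pair first and then hunting for a representative you have added a step that the paper's set-level formula sidesteps. In short: same plan, same case split, same minimality argument, but the paper's choice of candidate is more economical and your $P^{-}\!\leadsto P_{-}$ heuristic for the dualization is off the mark.
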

\begin{proof}
Define
\begin{center}
$RS(Y) \leftarrow RS(Z)=RS(Y \nabla R^{-}(Y-Y \nabla Z))$
\end{center}
satisfy the condition that $RS(Z) \Delta (RS(Y) \leftarrow RS(Z)) \geq RS(Y)$
\begin{equation}
RS(Z) \Delta RS(Y \nabla R^{-}(Y-Y \nabla Z)) \geq RS(Y)
\end{equation}
\textbf{Case 1}\\
Assume $Y \nabla Z=\emptyset$. Then (8) becomes
\begin{center}
$RS(Z) \Delta RS(Y \nabla R^{-}(Y))=RS(Z \Delta (Y \nabla R^{-}(Y)))$\\
$=RS((Z \Delta Y) \nabla (Z \Delta R^{-}(Y))) \geq RS(Y)$\\
$\therefore RS(Z) \Delta (RS(Y) \leftarrow RS(Z)) \geq RS(Y)$
\end{center}
\textbf{Case 2}\\
Assume $Y \nabla Z \neq \emptyset$. Then from (8) 
\begin{center}
$RS(Z) \Delta RS(Y \nabla R^{-}(Y-Y \nabla Z))=RS((Z \Delta Y) \nabla (Z \Delta R^{-}(Y-Y \nabla Z))$
\end{center}
It is clear that $RS(Z \Delta Y) \geq RS(Y)$. Now to prove 
\begin{center}
$RS(Z \Delta R^{-}(Y-Y \nabla Z)) \geq RS(Y \Delta Z)$
\end{center}
where $RS(Z) \leq RS(Z \Delta R^{-}(Y-Y \nabla Z))$ and it remains to show that $RS(Y) \leq RS(Z \Delta R^{-}(Y-Y \nabla Z))$. Let $x \in Y$ then either $x \notin Y \nabla Z$ (or) $x \in Y \nabla Z$.\\
If $x \notin Y \nabla Z$ then $x \in Y - Y \nabla Z$
\begin{center}
$\Rightarrow x \in R^{-}(Y-Y \nabla Z)$\\
$\Rightarrow x \in Z \Delta R^{-}(Y-Y \nabla Z)$\\
$\therefore Y \subseteq Z \Delta R^{-}(Y-Y \nabla Z)$\\
$\Rightarrow RS(Y) \leq RS(Z \Delta R^{-}(Y-Y \nabla Z))$
\end{center}
If $x \in Y \nabla Z$ then $x \notin Y - Y \nabla Z$
\begin{center}
$\Rightarrow x \notin R^{-}(Y-Y \nabla Z)$\\
$\Rightarrow x \in Z \Delta R^{-}(Y-Y \nabla Z)$  (since $x \in Y \nabla Z)$\\
$\therefore Y \subseteq Z \Delta R^{-}(Y-Y \nabla Z)$\\
$\Rightarrow RS(Y) \leq RS(Z \Delta R^{-}(Y-Y \nabla Z))$
\end{center}
Therefore by the assumptions
\begin{center}
$RS(Z \Delta R^{-}(Y-Y \nabla Z)) \geq RS(Y \Delta Z)$
\end{center}
Hence (8) becomes 
\begin{center}
$RS(Z) \Delta RS(Y \nabla R^{-}(Y-Y \nabla Z))=RS((Z \Delta Y) \nabla (Z \Delta R^{-}(Y-Y \nabla Z))$\\
$\geq RS(Z \Delta Y) \geq RS(Y)$\\
$\therefore RS(Z) \Delta (RS(Y) \leftarrow RS(Z)) \geq RS(Y)$
\end{center}
Now to prove $RS(Y) \leftarrow RS(Z)$ is the minimum. If there exists $RS(K) \in T$ satisfying (8) then 
\begin{equation}
RS(K \Delta Z) \geq RS(Y)
\end{equation}
Now it is enough to prove $RS(Y) \leftarrow RS(Z) \leq RS(K)$. Let $x \in Y \nabla R^{-}(Y-Y \nabla Z)$
\begin{center}
$\Rightarrow x \in Y$ and $ x \notin Z$\\
$\Rightarrow x \in K \Delta Z$  (by (9))
\end{center}
But $x \notin Z$
\begin{center}
$\Rightarrow x \in K$\\
$Y \nabla R^{-}(Y-Y \nabla Z) \subseteq K$\\
$RS(Y) \leftarrow RS(Z) \leq RS(K)$
\end{center}
Therefore, $RS(Y) \leftarrow RS(Z)$ is the minimum element in $T$ for the dual relative pseudocomplement of $RS(Y)$  with respect to $RS(Z)$. Hence the minimum exists and it is unique.
\end{proof}

\begin{definition}
A dual rough Heyting algebra $(T,\Delta,\nabla,^{+}, \leftarrow, RS(\emptyset), RS(U))$ (or) dual rough pseudo-boolean algebra is rough semiring $(T,\Delta,\nabla)$ with the greatest element $RS(U)$ in which the dual relative pseudocomplement is defined for every pair of elements in $T$. 
\end{definition}

\begin{theorem}
For the given approximation space $I=(U, R)$, the rough semiring $(T,\Delta,\nabla)$ is a dual rough Heyting algebra.
\end{theorem}
\begin{proof}
$(T,\Delta,\nabla)$ be a rough semiring \cite{14}, and from Theorem 7, the dual relative pseudocomplement exists for every pair of elements in $T$. Hence $(T,\Delta,\nabla,^{+}, \leftarrow,RS(\emptyset),RS(U))$ is a dual rough Heyting algebra.
\end{proof}

\begin{definition}
A rough bi-Heyting algebra $(T, \Delta, \nabla,^{*}, ^{+},  \rightarrow, \leftarrow, RS(\emptyset), RS(U))$ is a rough semiring $(T,\Delta,\nabla)$ with a rough Heyting algebra and a dual rough Heyting algebra.
\end{definition}

\begin{theorem}
For the given approximation space $I=(U,R)$ and a rough semiring $(T,\Delta,\nabla)$, $RBHey(T)=(T, \Delta, \nabla,^{*}, ^{+},  \rightarrow, \leftarrow, RS(\emptyset), RS(U))$ is a rough bi-Heyting algebra.
\end{theorem}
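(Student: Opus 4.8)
The plan is to assemble the result directly from the two one-sided Heyting theorems already established, together with the fact that $(T,\Delta,\nabla)$ is a distributive Rough semiring. By the definition of a Rough bi-Heyting algebra, such a structure is nothing more than a Rough semiring that carries \emph{simultaneously} the structure of a Rough Heyting algebra and that of a dual Rough Heyting algebra on the same bounds $RS(\emptyset)$ and $RS(U)$. So there is essentially nothing new to compute; the task is to verify that the two structures live on one and the same carrier and refer to one and the same order.

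First I would recall that $(T,\Delta,\nabla)$ is a distributive Rough semiring with least element $RS(\emptyset)$ and greatest element $RS(U)$ {(Praba et al 2015)}, so the bounded distributive reduct demanded by both one-sided definitions is available. Next, by Theorem-5 the relative pseudocomplement $RS(Y)\rightarrow RS(Z)$ exists for every ordered pair in $T\times T$ — indeed Theorem-4 supplies the explicit element $RS(Y\rightarrow Z)=RS((Y\nabla Z)\Delta(E-P^{-}(Y-Y\nabla Z)))$ and its uniqueness — hence $(T,\Delta,\nabla,\rightarrow,RS(\emptyset),RS(U))$ is a Rough Heyting algebra. Dually, by Theorem-8 (resting on the element $RS(Y\leftarrow Z)=RS(Y\nabla P^{-}(Y-Y\nabla Z))$ from Theorem-7) the dual relative pseudocomplement $RS(Y)\leftarrow RS(Z)$ exists for every ordered pair, so $(T,\Delta,\nabla,\leftarrow,RS(\emptyset),RS(U))$ is a dual Rough Heyting algebra.

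Finally I would invoke the definition of a Rough bi-Heyting algebra: since the very same Rough semiring $(T,\Delta,\nabla)$ with the very same bounds $RS(\emptyset),RS(U)$ supports both the Heyting operation $\rightarrow$ (Theorem-5) and the dual Heyting operation $\leftarrow$ (Theorem-8), the structure $(T,\Delta,\nabla,\rightarrow,\leftarrow,RS(\emptyset),RS(U))$ satisfies that definition verbatim, which completes the proof.

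I do not expect a genuine obstacle here, since the substantive work was already carried out in Theorems-4 through 8, where the two relative operations were constructed and shown to be well defined and unique. The only point to be slightly careful about is bookkeeping: one should confirm that the partial order on $T$ used implicitly by the ``$\sup$'' in the definition of $\rightarrow$ and by the ``$\inf$'' in the definition of $\leftarrow$ is the single order induced by $\Delta$ and $\nabla$ (equivalently, componentwise inclusion of lower and upper approximations of Rough sets). Once that observation is recorded, the two one-sided structures are genuinely compatible and the theorem follows immediately.
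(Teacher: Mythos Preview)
Your proposal is correct and matches the paper's approach: the paper's own proof is simply ``The proof is straight forward,'' and what you have written is precisely the unpacking of that remark, namely combining Theorem-5 (Rough Heyting) and Theorem-8 (dual Rough Heyting) via the definition of a Rough bi-Heyting algebra. Your added caveat about the single underlying order is a reasonable piece of bookkeeping but not something the paper treats as an issue.
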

\begin{proof}
The proof is straightforward.
\end{proof}

\begin{remark}
Let $(T, \Delta, \nabla,^{*}, ^{+},  \rightarrow, \leftarrow, RS(\emptyset), RS(U))$ be a rough bi-Heyting algebra. Then the following properties  
\begin{enumerate}
	\item $RS(X) \rightarrow RS(Y)=RS(U)$ iff $RS(X) \leq RS(Y)$
	\item $RS(X) \rightarrow RS(U)=RS(U)$
	\item $RS(\emptyset) \rightarrow RS(X)=RS(U)$
	\item $RS(U) \rightarrow RS(X)=RS(X)$
	\item $RS(X) \leftarrow RS(Y)=RS(\emptyset)$ iff $RS(X) \leq RS(Y)$
	\item $RS(X) \leftarrow RS(U)=RS(\emptyset)$
	\item $RS(X) \leftarrow RS(\emptyset)=RS(X)$
	\item $RS(\emptyset) \leftarrow RS(X)=RS(\emptyset)$
\end{enumerate}
holds for all $RS(X),RS(Y) \in T$.
\end{remark}

\section{Heyting-Brouwer logic in the context of Rough bi-Heyting algebra}

This section introduces Brouwer’s inspired study of intuitionistic logic through rough bi-Heyting algebraic axiomatic calculus. As bi-intuitionistic logic introduced \cite{5} is the conventional extension of intuitionistic propositional calculus in the study of bi-Heyting algebras. This section explores the rough H-B logic through propositional and predicate calculus semantics. In the existing literature, bi-Heyting algebras were employed to model bi-intuitionistic logic. Instead of emphasizing truth-valuation, bi-intuitionistic logic focuses more on justification based on evidence and probability. Arend Heyting had previously established intuitionistic logic to formalize Brouwer’s intuitionism, where the intuitionistic logic ideas are derived from classical logic. The difference in their interpretation is notably the law of excluded middle, which is permissible in classical logic and not applicable in intuitionistic logic. Consequently, the law of excluded middle for the weaker notions of complement $(^{*})$ and $(^{+})$ in rough bi-Heyting algebra for the operations Praba $\Delta$ and Praba $\nabla$ respectively does not applicable for all $RS(X)$ in $T$. This is a common feature where the rough bi-Heyting algebra can be considered to model the rough Heyting-Brouwer logic.\\

Consider an approximation space $I=(U, R)$, the equivalence classes $\{X_{1}, X_{2}, ...X_{n}\}$ are obtained using $R$ and the cardinality of $m$ equivalence classes $\{X_{1}, X_{2}, ...X_{m}\}$ are more than one. Then the cardinality of remaining $n-m$ equivalence classes $\{X_{m+1}, X_{m+2}, ...X_{n}\}$ is equal to one. For each subset $X$ of $U$, the rough set $RS(X)$ is defined through the lower and upper approximation of $X$. Let $T$ denote the set of all rough sets and in Section 3, we prove that $(T,\Delta,\nabla,^{*},^{+},\rightarrow,\leftarrow, RS(\emptyset), RS(U))$ is a rough bi-Heyting algebra. This algebraic structure is taken to model Heyting-Brouwer's logic by imposing the logical connection into an approximation space. Suppose that each object in $U$ symbolizes a logical statement (or) a propositional variable taking the value true (or) false, indicated by 1 (or) 0 respectively. 

\subsection{Syntax}
The language of rough Heyting-Brouwer logic is defined using the following
\begin{enumerate}
    \item A rough set $RS(\{x\})$: $x \in U$ is a propositional variable in the universe
    \item connectives: Binary connectives $\Delta, \nabla, \rightarrow, \leftarrow$ and unary connectives $^{*}, ^{+}$ (Ref: Section 3)
    \item Truth values: (0,0), (1,1), and (0,1) mean the non-satisfiability, satisfiability, and possible satisfiability
    \item Parentheses: (,) 
\end{enumerate}

\begin{definition}
The well-formed formula for rough Heyting-Brouwer logic is defined by
\begin{enumerate}
    \item $\{RS(\{x\}) \mid x \in U\}$ is a well-formed formula, where $x$ is a propositional variable
    \item $RS(\varPhi)$ is a well-formed formula, where $RS(\varPhi)$ is obtained from the connectives and parentheses
    \item If $RS(\varPhi), RS(\varPsi)$ are well-formed formula, then $RS(\varPhi) \Delta RS(\varPsi), RS(\varPhi) \nabla RS(\varPsi),RS(\varPhi)^{*}, RS(\varPsi)^{*},RS(\varPhi)^{+},$\\$ RS(\varPsi)^{+}, RS(\varPhi) \rightarrow RS(\varPsi), RS(\varPhi) \leftarrow RS(\varPsi)$ are all well-formed formulas
\end{enumerate}
\end{definition}

\begin{definition}
The logical axioms of rough Heyting-Brouwer logic are 
\begin{enumerate}
\item $RS(\emptyset) \rightarrow RS(X)$
\item $RS(U) \leftarrow RS(\emptyset)$
\item $RS(X) \rightarrow RS(X)$
\item $RS(X) \rightarrow (RS(X) \leftarrow RS(\emptyset))$
\item $RS(X) \rightarrow (RS(X) \Delta RS(Y))$
\item $RS(Y) \rightarrow (RS(X) \Delta RS(Y))$
\item $(RS(X) \nabla RS(Y)) \rightarrow RS(X)$
\item $(RS(X) \nabla RS(Y)) \rightarrow RS(Y)$
\item $(RS(X) \nabla RS(X)^{*}) \rightarrow RS(Y)$
\item $RS(X) \rightarrow (RS(Y) \Delta RS(Y)^{+})$
\end{enumerate}
\end{definition}

\begin{definition} 
For the rough bi-Heyting algebra $RBHey(T)=(T,\Delta,\nabla,^{*},^{+},\rightarrow,\leftarrow, RS(\emptyset), RS(U))$, the truth value of a propositional variable is defined through the function $TrV: U \rightarrow \{0,1\}$. For the subsets $X_{i} \subseteq U$ for $i=1,2, \ldots n$, the truth value of $X_{i}=\{x_{i_{1}}, x_{i_{2}},\ldots x_{i_{t}} \}$ is defined by,
\begin{center}
$TrV(X_{i})= \underset{x_{i_{j}} \in X_{i}}{min}  TrV(x_{i_{j}})$, where $j=1,2,...t$
\end{center}
where $\{x_{i_{1}}, x_{i_{2}},\ldots x_{i_{t}}\}$ are the propositional variables of the subset $X_{i}$ of $U$, taking the truth value 0 (or) 1. 
\end{definition} 

\begin{definition}
For any $RS(X) \in T$, the truth value of $RS(X)$ is defined by
\begin{center}
$R-TrV(RS(X))=(R-TrV(R_{-}(X)), R-TrV(R^{-}(X)))$
\end{center}
where $R-TrV(R_{-}(X))=\max \{TrV(X_{k}) \mid X_{k} \in R_{-}(X)\}$ and $R-TrV(R^{-}(X))=\max \{TrV(X_{l}) \mid X_{l} \in R^{-}(X)\}$
\end{definition} 
Hence we have the rough truth value function defined by $R-TrV: T \rightarrow \{(0,0),(0,1),(1,1)\}$
\begin{remark}
For the equivalence classes in $U$, if any of the propositional variables in an equivalence class has the truth value 0, then the truth value of an equivalence class is 0, otherwise 1. If any one of the equivalence classes in $R_{-}(X)$ (or) $R^{-}(X)$ has the truth value 1, then the rough truth value of $R_{-}(X)$ (or) $R^{-}(X)$ is 1, otherwise 0. 
\end{remark}
Note that (1,0) is not included in the co-domain as it is not a permissible truth value for any $RS(X)=(R_{-}(X), R^{-}(X))$

\begin{definition} 
A model $\mathbb{M}$ is a structure $\mathbb{M}=(RBHey(T), R-TrV)$, where $RBHey(T)$ is a rough bi-Heyting algebra and $R-TrV$ is a rough truth value function. The satisfiability of the well-formed formula (wff) using the function $R-TrV$ is given by

\begin{enumerate}
    \item A wff $RS(\varPhi)$ is said to be satisfiable if $R-TrV(RS(\varPhi))=(1,1)$
    \item A wff $RS(\varPhi)$ is said to be possibly satisfiable if $R-TrV(RS(\varPhi))=(0,1)$
    \item A wff $RS(\varPhi)$ is said to be not satisfiable if $R-TrV(RS(\varPhi))=(0,0)$
\end{enumerate}
\end{definition}

\begin{definition}
Given a wff $RS(\varPhi)$ under a model $\mathbb{M}$, $RS(\varPhi)$ is said to hold (or) satisfiable in the model is denoted by $\mathbb{M} \vDash^{S} RS(\varPhi)$ iff $R-TrV(RS(\varPhi))=(1,1)$ and not satisfiable in the model $\mathbb{M}$ is denoted by $\mathbb{M} \nvDash^{N} RS(\varPhi)$ iff $R-TrV(RS(\varPhi))=(0,0)$. Also the possible satisfiability of wff $RS(\varPhi)$ in the model $\mathbb{M}$ is denoted by $\mathbb{M} \vDash^{P} RS(\varPhi)$ iff $R-TrV(RS(\varPhi))=(0,1)$ 

\end{definition}

The above discussion leads to defining the rough Heyting-Brouwer logic

\begin{definition}
Rough Heyting-Brouwer logic is the union of intuitionistic and dual intuitionistic logic, and its conservative extension is applied to the elements of rough bi-Heyting algebra. 
In other words, the intuitionistic and dual intuitionistic implications are defined for the rough sets.
\end{definition}

\subsection{Semantics for rough Heyting-Brouwer propositional calculus}
The Heyting-Brouwer (H-B) interpretation is the conventional explanation for bi-intuitionistic logic. If there is direct evidence, or constructive proof, supporting a proposition, it is considered true. Rather than valuing truth, bi-intuitionistic logic operations retain justification in evidence and probability. The most predominant approach in rough H-B propositional logic is the semantic approach. Here the propositional variables in $U$ are assigned a value, either true if 1 (or) false if 0. The value of the proposition is not always universally absolute but varies for different worlds in $T$. Rough bi-Heyting algebraic elements are considered as worlds when formulating the semantics. This method of considering the world is extended under a rough set framework in the following way. 
Suppose if $RS(Z)$ is a formula, then the value of propositional variables in $RS(Z)$ is determined with respect to the world $RS(X)$ based on the presence of elements in the lower and upper approximation of $RS(X)$.
This permits us to calculate the rough truth value of any mathematical expression $RS(\varPhi)$ with respect to the world $RS(X)$ under the model considered.  An algebraic model with a satisfaction relation of these mathematical expressions using the operations of rough bi-Heyting algebra is rough Heyting-Brouwer propositional calculus (RBi-IPC). Now the formal semantics for the rough H-B propositional calculus will be defined.\\

For the model $\mathbb{M}=(RBHey(T), R-TrV)$ and the world $RS(X) \in T$, the satisfaction relation for the rough H-B propositional calculus is as follows

\begin{enumerate}
    
\item For any formula $RS(Z) \in T$ 
     \begin{enumerate}
     \item $\mathbb{M},RS(X) \vDash^{S} RS(Z)$ iff $R_{-}(Z) \cap R_{-}(X) \not = \emptyset$ and $R^{-}(Z) \cap R^{-}(X) \not = \emptyset$ 
 
     \item $\mathbb{M},RS(X) \vDash^{P} RS(Z)$ iff $R_{-}(Z) \cap R_{-}(X)  = \emptyset$ and $R^{-}(Z) \cap R^{-}(X) \not = \emptyset$ 
     
     \item $\mathbb{M},RS(X) \nvDash^{N} RS(Z)$ iff $R_{-}(Z) \cap R_{-}(X)  = \emptyset$ and $R^{-}(Z) \cap R^{-}(X) = \emptyset$  
     \end{enumerate}
     
\item  Any satisfiability ($\mathbb{M}, RS(X)\vDash^{S}, \mathbb{M}, RS(X)\vDash^{P}$ and $\mathbb{M}, RS(X)\nvDash^{N}$) of $RS(\varPhi) \Delta RS(\varPsi)$, $RS(\varPhi) \nabla RS(\varPsi)$, $RS(\varPhi) \rightarrow RS(\varPsi)$, $RS(\varPhi) \leftarrow RS(\varPsi)$, $RS(\varPhi)^{*}$ and $RS(\varPhi)^{+}$ through the satisfaction relation is validated using (1).       
\end{enumerate}

\begin{remark}
A formula $RS(Z)$ is said to be satisfiable in the model $\mathbb{M}$ with respect to the world $RS(X)$ if $R-TrV(RS(Z))=(1,1)$. It is denoted by $\mathbb{M},RS(X) \vDash^{S} RS(Z)$. A formula $RS(Z)$ is possibly satisfiable if $R-TrV(RS(Z))=(0,1)$ and not satisfiable if $R-TrV(RS(Z))=(0,0)$.
\end{remark}

\begin{remark}
As per the satisfaction relation, if $\mathbb{M},RS(X) \vDash^{S} RS(\{x\})$ then $R^{-}(\{x\}) \subset R^{-}(X)$
\begin{center}
 $\Rightarrow R_{-}(\{x\}) \subset R_{-}(X)$
\end{center}
but $R_{-}(\{x\})=\emptyset$ for any $x \in U$ and $R-TrV(R_{-}(\{x\}))=0$. Therefore, the satisfiability is not feasible for $RS(\{x\})$.
\end{remark}

\subsection{Semantics for rough Heyting-Brouwer predicate Calculus}
Standard semantics is the extension of formal semantics in bi-intuitionistic logic. A hierarchy of standard semantics available within bi-intuitionistic logic systems includes Algebraic semantics, Topological semantics, Beth semantics, Kripke semantics, etc. However, in this section, the standard semantic to be focused on is the Kripke semantic. Kripke semantics offers a method for defining the truth conditions for modal formulations. It interprets the notion of falsity in semantics as a notion of the absence of constructive proof. However, our proposed method dwells on the semantic verification of rough H-B predicate logic through the rough  Kripke semantic. A Kripke model consists of a pre-ordered set that deals with accessibility through the satisfaction relation. An element in the pre-ordered set of a model is a state (or) a world, which decides whether the formulas are satisfied within the state (or) the state that extends it. Using our rough Heyting-Brouwer logic language, the rough Kripke model to be discussed deals with the structure $\mathbb{M}=(RBHey(T), R-TrV)$, where $RBHey(T)$ is a rough bi-Heyting algebra and each $RS(X) \in T$ is considered to be a world in which the formulas are validated. Satisfaction of semantics in the rough Kripke model behaves similarly to the semantics of rough Heyting-Brouwer propositional calculus but the difference in their interpretation is that all the rough bi-Heyting algebraic operations $^{*},^{+},\rightarrow,\leftarrow$ evaluate the formula from the fixed $RS(X) \in T$ to all world that is accessible from $RS(X)$, that is $RS(X) \leq RS(Y)$ (or) $RS(Y) \leq RS(X)$ where $RS(Y) \in T$. This will further broaden the mathematical expression under the satisfaction relation from a fixed world $RS(X) \in T$ to any world that extends it to explore the rough Heyting-Brouwer predicate calculus (RBi-IQC). \\

We begin by defining the structure of the rough Kripke model.

\begin{definition}
A rough Kripke model is a structure $\mathbb{M}=(RBHey(T), R-TrV)$ where
\end{definition}
\begin{enumerate}
	\item $RBHey(T)$ is a rough bi-Heyting algebra 
	\item $R-TrV$ is a truth value function.
\end{enumerate}

\begin{definition}
 For the rough Kripke model $\mathbb{M}=(RBHey(T), R-TrV)$ and the world $RS(X) \in T$, the satisfiability relation is given as follows
\end{definition}
\begin{enumerate}
\item For any formula $RS(Z) \in T$ 
     \begin{enumerate}
     \item $\mathbb{M},RS(X) \vDash^{S} RS(Z)$ iff $R_{-}(Z) \cap R_{-}(X) \not = \emptyset$ and $R^{-}(Z) \cap R^{-}(X) \not = \emptyset$ 
 
     \item $\mathbb{M},RS(X) \vDash^{P} RS(Z)$ iff $R_{-}(Z) \cap R_{-}(X)  = \emptyset$ and $R^{-}(Z) \cap R^{-}(X) \not = \emptyset$ 
     
     \item $\mathbb{M},RS(X) \nvDash^{N} RS(Z)$ iff $R_{-}(Z) \cap R_{-}(X)  = \emptyset$ and $R^{-}(Z) \cap R^{-}(X) = \emptyset$  
     \end{enumerate}
     
\item  The conclusion ($\mathbb{M}, RS(X)\vDash^{S}, \mathbb{M}, RS(X)\vDash^{P}$ and $\mathbb{M}, RS(X)\nvDash^{N}$) of $RS(\varPhi) \Delta RS(\varPsi)$ and $RS(\varPhi) \nabla RS(\varPsi)$ through the satisfaction relation is validated using (1). 

\item \begin{enumerate}
    \item If $\mathbb{M},RS(X) \vDash^{S} RS(\varPhi) \rightarrow RS(\varPsi)$ (or $RS(\varPhi)^{*}$) then $\forall RS(Y)$ such that $RS(X) \leq RS(Y)$
   \begin{enumerate}
       \item $\mathbb{M},RS(Y) \vDash^{S} RS(\varPhi) \rightarrow RS(\varPsi)$ (or $RS(\varPhi)^{*}$)
   \end{enumerate}

   \item If $\mathbb{M},RS(X) \vDash^{P} RS(\varPhi) \rightarrow RS(\varPsi)$ (or $RS(\varPhi)^{*}$) then $\forall RS(Y)$ such that $RS(X) \leq RS(Y)$
   \begin{enumerate}
       \item $\mathbb{M},RS(Y) \vDash^{P} RS(\varPhi) \rightarrow RS(\varPsi)$ (or $RS(\varPhi)^{*}$) (or) $\mathbb{M},RS(Y) \vDash^{S} RS(\varPhi) \rightarrow RS(\varPsi)$ (or $RS(\varPhi)^{*}$)
   \end{enumerate}

   \item If $\mathbb{M},RS(X) \nvDash^{N} RS(\varPhi) \rightarrow RS(\varPsi)$ (or $RS(\varPhi)^{*}$) then $\forall RS(Y)$ such that $RS(X) \leq RS(Y)$
   \begin{enumerate}
       \item $\mathbb{M},RS(Y) \nvDash^{N} RS(\varPhi) \rightarrow RS(\varPsi)$ (or $RS(\varPhi)^{*}$) (or) $\mathbb{M},RS(Y) \vDash^{P} RS(\varPhi) \rightarrow RS(\varPsi)$ (or $RS(\varPhi)^{*}$) (or) (or) $\mathbb{M},RS(Y) \vDash^{S} RS(\varPhi) \rightarrow RS(\varPsi)$ (or $RS(\varPhi)^{*}$)
   \end{enumerate}
\end{enumerate}

\item \begin{enumerate}
    \item If $\mathbb{M},RS(X) \vDash^{S} RS(\varPhi) \leftarrow RS(\varPsi)$ (or $RS(\varPhi)^{+}$) then $\forall RS(Y)$ such that $RS(Y) \leq RS(X)$
   \begin{enumerate}
       \item $\mathbb{M},RS(Y) \vDash^{S} RS(\varPhi) \leftarrow RS(\varPsi)$ (or $RS(\varPhi)^{+}$) (or) (or) $\mathbb{M},RS(Y) \vDash^{P} RS(\varPhi) \leftarrow RS(\varPsi)$ (or $RS(\varPhi)^{+}$) (or) (or) $\mathbb{M},RS(Y) \nvDash^{N} RS(\varPhi) \leftarrow RS(\varPsi)$ (or $RS(\varPhi)^{+}$)
   \end{enumerate}

   \item If $\mathbb{M},RS(X) \vDash^{P} RS(\varPhi) \leftarrow RS(\varPsi)$ (or $RS(\varPhi)^{+}$) then $\forall RS(Y)$ such that $RS(Y) \leq RS(X)$
   \begin{enumerate}
       \item $\mathbb{M},RS(Y) \vDash^{P} RS(\varPhi) \leftarrow RS(\varPsi)$ (or $RS(\varPhi)^{+}$) (or) $\mathbb{M},RS(Y) \nvDash^{N} RS(\varPhi) \leftarrow RS(\varPsi)$ (or $RS(\varPhi)^{+}$)
   \end{enumerate}

   \item If $\mathbb{M},RS(X) \nvDash^{N} RS(\varPhi) \leftarrow RS(\varPsi)$ (or $RS(\varPhi)^{+}$) then $\forall RS(Y)$ such that $RS(Y) \leq RS(X)$
   \begin{enumerate}
       \item $\mathbb{M},RS(Y) \nvDash^{N} RS(\varPhi) \leftarrow RS(\varPsi)$ (or $RS(\varPhi)^{+}$) 
   \end{enumerate}
\end{enumerate}

\end{enumerate}

\textbf{Illustraion 1:} 
 Consider the set of atomic propositions $U=\{x_{1},x_{2},x_{3},x_{4},x_{5},x_{6}\}$ and the equivalence classes are $X_{1}=\{x_{1},x_{3}\}$, $X_{2}= \{x_{2},x_{4},x_{6}\}$ and $X_{3}=\{x_{5}\}$. The model verifies the conclusions derived by the well-formed formulas $RS(\varPhi)$ and $RS(\varPsi)$ through the satisfaction relation of rough Kripke semantics in the world $RS(\{x_{1}\} \cup X_{2} \cup X_{3}) \in T$.\\

The truth values of $X_{1},X_{2}$ and $X_{3}$ for the world $RS(\{x_{1}\} \cup X_{2}  \cup X_{3}) \in T$ are 0, 1 and 1 respectively. The formulas $RS(\varPhi)$ and $RS(\varPsi)$ are defined to be

\begin{center}
 $RS(\varPhi)=(RS(\{x_{2}\} \cup X_{3})^{+} \rightarrow RS(X_{1})) \rightarrow RS(\{x_{1}\} \cup \{x_{2}\})$ \\
 $RS(\varPsi)=(RS(\{x_{1}\} \cup \{x_{2}\}) \leftarrow RS(\{x_{1}\})^{*}) \Delta RS(X_{1} \cup \{x_{2}\})$
\end{center}

\begin{enumerate}
\item  $RS(\varPhi)=(RS(\{x_{2}\} \cup X_{3})^{+} \rightarrow RS(X_{1})) 
       \rightarrow RS(\{x_{1}\} \cup \{x_{2}\})=(RS(X_{1} \cup X_{2}) \rightarrow RS(X_{1})) \rightarrow RS(\{x_{1}\} \cup \{x_{2}\})=RS(X_{1} \cup X_{3}) \rightarrow RS(\{x_{1}\} \cup \{x_{2}\})=RS(\{x_{1}\} \cup X_{2})$ 
       \begin{center}
       $R_{-}(\varPhi) \cap R_{-}(\{x_{1}\} \cup X_{2}  \cup X_{3})=X_{2}$ and $R^{-}(\varPhi) \cap R^{-}(\{x_{1}\} \cup X_{2}  \cup X_{3})=X_{1} \cup X_{2}$\\
       $\Rightarrow \mathbb{M},RS(\{x_{1}\} \cup X_{2} \cup X_{3}) \vDash^{S} 
        RS(\varPhi)$
       \end{center}

\item $RS(\varPsi)=(RS(\{x_{1}\} \cup \{x_{2}\}) \leftarrow RS(\{x_{1}\})^{*}) 
      \Delta RS(X_{1} \cup \{x_{2}\})=(RS(\{x_{1}\} \cup \{x_{2}\}) \leftarrow RS(X_{2} \cup X_{3})) \Delta RS(X_{1} \cup \{x_{2}\})=RS(\{x_{1}\})  \Delta RS(X_{1} \cup \{x_{2}\})=RS(X_{1} \cup \{x_{2}\})$
      \begin{center}
     $R_{-}(\varPsi) \cap R_{-}(\{x_{1}\} \cup X_{2}  \cup X_{3})=\emptyset$ and $R^{-}(\varPsi) \cap R^{-}(\{x_{1}\} \cup X_{2}  \cup X_{3})=X_{1} \cup X_{2} \not = \emptyset$\\
     $\Rightarrow \mathbb{M},RS(\{x_{1}\} \cup X_{2} \cup X_{3}) \vDash^{P} 
        RS(\varPsi)$
      \end{center}

\item $RS(\varPhi) \Delta RS(\varPsi)=RS(\{x_{1}\} \cup X_{2}) \Delta RS(X_{1} \cup 
     \{x_{2}\})=RS(X_{1} \cup X_{2})$
    \begin{center}
    $\Rightarrow \mathbb{M},RS(\{x_{1}\} \cup X_{2} \cup X_{3}) \vDash^{S} 
    RS(\varPhi) \Delta RS(\varPsi)$
    \end{center}

\item $RS(\varPhi) \nabla RS(\varPsi)=RS(\{x_{1}\} \cup X_{2}) \nabla RS(X_{1} \cup 
     \{x_{2}\})=RS(\{x_{1}\} \cup\{x_{2}\})$
    \begin{center}
    $R_{-}(\{x_{1}\} \cup\{x_{2}\}) \cap R_{-}(\{x_{1}\} \cup X_{2} \cup X_{3})= \emptyset$ and $R^{-}(\{x_{1}\} \cup\{x_{2}\}) \cap R^{-}(\{x_{1}\} \cup X_{2} \cup X_{3})=X_{1} \cup X_{2} \not = \emptyset$
    \end{center}
    \begin{center}
    $\Rightarrow \mathbb{M},RS(\{x_{1}\} \cup X_{2} \cup X_{3}) \vDash^{P} 
    RS(\varPhi) \Delta RS(\varPsi)$
    \end{center}

\item $RS(\varPhi) \rightarrow RS(\varPsi)=RS(\{x_{1}\} \cup X_{2}) \rightarrow 
      RS(X_{1} \cup \{x_{2}\})=RS(X_{1} \cup\{x_{2}\})$
       \begin{center}
    $R_{-}(X_{1} \cup\{x_{2}\}) \cap R_{-}(\{x_{1}\} \cup X_{2} \cup X_{3})= \emptyset$ and $R^{-}(X_{1} \cup\{x_{2}\}) \cap R^{-}(\{x_{1}\} \cup X_{2} \cup X_{3})=X_{1} \cup X_{2} \not = \emptyset$
    \end{center}
      
    \begin{center}
    $\Rightarrow \mathbb{M},RS(\{x_{1}\} \cup X_{2} \cup X_{3}) \vDash^{P} 
    RS(\varPhi) \rightarrow RS(\varPsi)$
    \end{center}

     \begin{table}[!ht]
      \small
      \begin{center}
      \label{tab:table1}
      \begin{tabular}{|c|c|c|c|c|} 
      \hline  $\{RS(Y) \mid RS(\{x_{1}\} \cup X_{2} \cup X_{3}) \leq RS(Y)\}$ &  $RS(\varPhi) \rightarrow RS(\varPsi)$ &  $RS(\varPhi)^{*}=RS(\varPsi)^{*}=RS(X_{3})$ & $RS(\varPhi)$ & $RS(\varPsi)$\\ \hline
      $RS(\{x_{1}\} \cup X_{2} \cup X_{3})$ & (0,1) & (1,1) & (1,1) & (0,1) \\ \hline
      $RS(U)$ & (1,1) & (1,1) & (1,1) & (1,1)\\ \hline 
       \end{tabular}
	\caption{Verification for the satisfiability of $RS(\varPhi) \rightarrow 
        RS(\varPsi), RS(\varPhi)^{*}, RS(\varPsi)^{*}$ from the accessible worlds}
         \end{center}
       \end{table}

      From Table 3, the world $RS(U)$ accessible from $RS(\{x_{1}\} \cup X_{2} \cup X_{3})$ provided $RS(\{x_{1}\} \cup X_{2} \cup X_{3}) \leq RS(U)$ then
       \begin{center}
       $\mathbb{M},RS(U) \vDash^{S} RS(\varPhi) \rightarrow RS(\varPsi)$  
       \end{center}
      
\item $RS(\varPhi) \leftarrow RS(\varPsi)=RS(\{x_{1}\} \cup X_{2}) \leftarrow 
      RS(X_{1} \cup \{x_{2}\})=RS(X_{2})$
      \begin{center}
      $\Rightarrow \mathbb{M},RS(\{x_{1}\} \cup X_{2} \cup X_{3}) \vDash^{S} 
      RS(\varPhi) \leftarrow RS(\varPsi)$
      \end{center}

        \begin{table}[!ht]
       \small
       \begin{center}
      \label{tab:table1}
       \begin{tabular}{|c|c|c|c|c|c|} 
      \hline  $\{RS(Y) \mid RS(Y) \leq RS(\{x_{1}\} \cup X_{2} \cup X_{3})\}$ & $RS(\varPhi) \leftarrow RS(\varPsi)$ & $RS(\varPhi)^{+}$ & $RS(\varPsi)^{+}$ & $RS(\varPhi)$ & $RS(\varPsi)$\\ \hline 
       $RS(\{x_{1}\} \cup X_{2} \cup X_{3})$ & (1,1) & (1,1) & (1,1) & (1,1) & (0,1)\\ \hline
       $RS(\{x_{1}\} \cup X_{2})$ & (1,1) & (0,0) & (1,1) & (1,1) & (0,1)\\ \hline
       $RS(X_{2} \cup X_{3})$ & (1,1) & (1,1) & (1,1) & (1,1) & (0,1)\\ \hline 
       $RS(\{x_{2}\} \cup X_{3})$ & (0,0) & (1,1) & (1,1) & (0,0) & (0,0)\\ \hline 
       $RS(\{x_{1}\} \cup \{x_{2}\})$ & (0,0) & (0,0) & (0,0) & (0,0) & (0,0)\\ \hline
       $RS(X_{2})$ & (1,1) & (0,0) & (1,1) & (1,1) & (0,1)\\ \hline
       $RS(X_{3})$ & (0,0) & (1,1) & (1,1) & (0,0) & (0,0)\\ \hline
       $RS(\{x_{2}\})$ & (0,0) & (0,0) & (0,0) & (0,0) & (0,0)\\ \hline
       $RS(\{x_{1}\})$ & (0,0) & (0,0) & (0,0) & (0,0) & (0,0)\\ \hline
       $RS(\emptyset)$ & (0,0) & (0,0) & (0,0) & (0,0) & (0,0)\\ \hline 
        \end{tabular}
	\caption{Verification for the satisfiability of $RS(\varPhi) \leftarrow RS(\varPsi), RS(\varPhi)^{+}, RS(\varPsi)^{+}$ 
        from the accessible worlds}
        \end{center}
        \end{table}
        
        From Table 4, when $RS(Y)=\{RS(X_{2}), RS(X_{2} \cup X_{3}), RS(\{x_{1}\} \cup X_{2}),RS(\{x_{1}\} \cup X_{2} \cup X_{3})\}$, provided $RS(Y) \leq RS(\{x_{1}\} \cup X_{2} \cup X_{3})$
       \begin{center}
       $\exists RS(Y)$: $\mathbb{M},RS(Y) \vDash^{S} RS(\varPhi) \leftarrow RS(\varPsi)$
       \end{center}

\item $RS(\varPhi)=RS(\{x_{1}\} \cup X_{2})$ and $RS(\varPhi)^{*}=RS(X_{3})$\\
      $RS(\varPsi)=RS(X_{1} \cup \{x_{2}\})$ and $RS(\varPsi)^{*}=RS(X_{3})$
      \begin{center}
      $\Rightarrow \mathbb{M},RS(\{x_{1}\} \cup X_{2} \cup X_{3}) \vDash^{S} 
      RS(\varPhi)^{*}$\\
      $\Rightarrow \mathbb{M},RS(\{x_{1}\} \cup X_{2} \cup X_{3}) \vDash^{S} 
      RS(\varPsi)^{*}$
      \end{center}

       From Table 9, for the world $RS(Y)$ provided $RS(\{x_{1}\} \cup X_{2} \cup X_{3}) \leq RS(Y)$ 
       \begin{center}
       $\forall RS(Y)$: $\mathbb{M},RS(Y) \vDash^{S} RS(\varPhi)^{*}$ and  $\mathbb{M},RS(Y) \vDash^{S} RS(\varPsi)^{*}$
       \end{center}

\item $RS(\varPhi)=RS(\{x_{1}\} \cup X_{2})$ and $RS(\varPhi)^{+}=RS(X_{1} \cup 
      X_{3})$\\
      $RS(\varPsi)=RS(X_{1} \cup \{x_{2}\})$ and $RS(\varPsi)^{+}=RS(X_{2} \cup X_{3})$
      \begin{center}
      $\Rightarrow \mathbb{M},RS(\{x_{1}\} \cup X_{2} \cup X_{3}) \vDash^{S} 
      RS(\varPhi)^{+}$\\
      $\Rightarrow \mathbb{M},RS(\{x_{1}\} \cup X_{2} \cup X_{3}) \vDash^{S} 
      RS(\varPsi)^{+}$
      \end{center}     

       From Table 10, for the world $RS(Y)$ and $RS(Y^{\prime})$ provided $RS(Y), RS(Y^{\prime}) \leq RS(\{x_{1}\} \cup X_{2} \cup X_{3})$
       \begin{center}
       $\exists RS(Y)$: $\mathbb{M},RS(Y) \vDash^{S} RS(\varPhi)^{+}$
       \end{center}

       \begin{center}
       $\exists RS(Y^{\prime})$: $\mathbb{M},RS(Y^{\prime}) \vDash^{S} RS(\varPsi)^{+}$
       \end{center}
\end{enumerate}

\textbf{Interpretation}:
In both Illustrations 3 and 4, the satisfiability of the formulas $RS(\varPhi), RS(\varPsi), RS(\varPhi) \Delta RS(\varPsi),$\\$ RS(\varPhi) \nabla RS(\varPsi)$ are verified similar to the semantics of rough H-B propositional calculus. But for formulas obtained through rough bi-Heyting algebraic operations $RS(\varPhi) \rightarrow RS(\varPsi), RS(\varPhi) \leftarrow RS(\varPsi), RS(\varPhi)^{*}$ and $RS(\varPhi)^{+}$, their satisfiability is verified from for the fixed world $RS(X) \in T$ to all the world that extends it. For the intuitionistic negations $\rightarrow$ and $^{*}$, the verification of the satisfiability is extended from $RS(X)$ to all $RS(Y)$, so that $RS(X) \leq RS(Y)$. On the other hand, for the dual intuitionistic negations $\leftarrow$ and $^{+}$, the verification of the satisfiability is extended from $RS(X)$ to all $RS(Y)$, so that $RS(Y) \leq RS(X)$. The semantic satisfiability of the rough Kripke model is different from the formal semantic in a way that formal semantics verifies the satisfiability of formulas in a fixed world but the rough Kripke model semantics verifies the satisfiability of formulas from the fixed world to the accessible worlds.\\

\section{Conclusion}
This study defined rough bi-Heyting algebra in the context of rough semiring $(T,\Delta,\nabla)$, making a novel contribution to the existing literature that primarily deals with the connection between Heyting algebra and rough set. This paper initiated a unique approach in defining rough bi-Heyting algebra on a rough semiring $(T,\Delta,\nabla)$ and contributes to the formalization of rough bi-Heyting algebra in characterizing weaker complements. The established rough bi-Heyting algebraic structure is also employed to develop the semantics of rough Heyting-Brouwer logic. The validation of syntax is verified in the semantics of rough Heyting-Brouwer logic. Looking ahead, future research aims to focus on using this rough bi-Heyting algebraic approach to model diverse semantics and broaden its practical applications.

\section{Acknowledgment}
The authors thank the Management and the Principal, of Sri Sivasubramaniya Nadar College of Engineering for providing the necessary facilities to carry out this work.

\section{Declarations}
The authors declare no conflict of interest.


\begin{thebibliography}{0} 
\bibliographystyle{plainnat}

\bibitem {1}{Adam Grabowski}. "Stone Lattices." (2015).

\bibitem {2}{Arnold Beckmann and Norbert Preining}. "Deciding logics of linear Kripke frames with scattered end pieces." \textit{Soft Computing } 21, no. 1 (2017): 191-197.

\bibitem {3}{Arnold Beckmann and Norbert Preining}. "Linear Kripke frames and G\"{o}del logics." \textit{The Journal of Symbolic Logic} 72, no. 1 (2007): 26-44.

\bibitem {4}{Cecylia Rauszer}. "An algebraic approach to the Heyting-Brouwer predicate calculus." \textit{Fundamenta Mathematicae} 2, no. 96 (1977): 127-135.

\bibitem {5}{Cecylia Rauszer}. "A formalization of the propositional calculus of HB logic." \textit{Studia Logica: An International Journal for Symbolic Logic} 33, no. 1 (1974): 23-34.

\bibitem {6}{Francesc Esteva, Aldo Figallo-Orellano, Tommaso Flaminio, and Lluis Godo}. "Logics of formal inconsistency based on distributive involutive residuated lattices." \textit{Journal of Logic and Computation} 31, no. 5 (2021): 1226-1265.

\bibitem {7}{Guram Bezhanishvili and Wesley H. Holliday}. "A semantic hierarchy for intuitionistic logic." \textit{Indagationes Mathematicae} 30, no. 3 (2019): 403-469.

\bibitem {8}{Ivan Chajda, Helmut L\"{a}nger, and Jan Paseka}. "Algebraic aspects of relatively pseudocomplemented posets." \textit{Order} 37, no. 1 (2020): 1-29.

\bibitem {9}{Ivan Chajda and Helmut L\"{a}nger}. "Relatively pseudocomplemented posets." \textit{Mathematica Bohemica} 143, no. 1 (2018): 89-97.

\bibitem {10}{A.Manimaran, B. Praba, and V. M. Chandrasekaran}. "Characterization of Rough semiring." \textit{Afrika Matematika} 28, no. 5 (2017): 945-956.

\bibitem {11}{Piero Pagliani}. "A pure logic-algebraic analysis of rough top and rough bottom equalities in Rough Sets." \textit{Fuzzy Sets and Knowledge Discovery}, pp. 227-236. Springer, London, 1994.

\bibitem {12}{Piero Pagliani}. "Information gaps as communication needs: A new semantic foundation for some non-classical logics." \textit{Journal of Logic, Language and Information} 6, no. 1 (1997): 63-99.

\bibitem {13}{B.Praba and R. Mohan}. "Rough lattice." \textit{International Journal of Fuzzy Mathematics and System} 3, no. 2 (2013): 135-151.

\bibitem {14}{B.Praba V. M. Chandrasekaran, and A. Manimaran}. "Semiring on Rough sets." \textit{Indian Journal of Science and Technology} 8, no. 3 (2015): 280-286.

\bibitem {15}{Raymond Balbes}. "On free pseudo-complemented and relatively pseudo-complemented semi-lattices." \textit{Fundamenta Mathematicae} 78 (1973): 119-131.

\bibitem {16}{Sankappanavar, H. P}. "Heyting algebras with dual pseudocomplementation." \textit{Pacific Journal of Mathematics} 117, no. 2 (1985): 405-415.

\bibitem {17}{Steffen Lewitzka}. "A modal logic amalgam of classical and intuitionistic propositional logic." \textit{Journal of Logic and Computation} 27, no. 1 (2017): 201-212.

\bibitem {18}{Stephen Comer}. "On connections between information systems, rough sets, and algebraic logic." \textit{Banach Center Publications} 28, no. 1 (1993): 117-124.

\bibitem {19}{Thomas Macaulay Ferguson}. "Explicit analyses of proof/refutation interaction for constructible falsity and Heyting–Brouwer logic." \textit{Journal of Logic and Computation} 30, no. 8 (2020): 1505-1540.

\bibitem {20}{Zdzis\"{l}aw Pawlak}. "Rough sets." \textit{International journal of computer and information sciences} 11, no. 5 (1982): 341-356.

\end{thebibliography}
\end{document}